\title{The multivariate Serre conjecture ring
}
\author{Luc Guyot(\thanks{~ email: luc.guyot.ge@gmail.com}~),  Ihsen
Yengui (\thanks{~ D\'epartement de Math\'ematiques,  Facult\'e des
Sciences, Universit\'e de Sfax, 3000 Sfax, Tunisia, email:
ihsen.yengui@fss.rnu.tn}~)   }
\DeclareSymbolFont{lasy}{U}{lasy}{m}{n}
\let\Box\undefined
\DeclareMathSymbol\Box{\mathord}{lasy}{"32}
\newtheorem{theorem}{Theorem}
\newtheorem{proposition}[theorem]{Proposition}
\newtheorem{lemma}[theorem]{Lemma}
\newtheorem{corollary}[theorem]{Corollary}
\newtheorem{example}[theorem]{Example}
\newtheorem{definition}[theorem]{Definition}
\newtheorem{notation}[theorem]{Notation}
\newcommand {\junk}[1]{}
\newenvironment{proof}[1]{
\trivlist \item[\hskip \labelsep{\bf #1}]}{\hfill\mbox{$\Box$}
\endtrivlist}
\def\maketitle{\par
\begingroup
\def\@makefnmark{\hbox
to 0pt{$^{\@thefnmark}$\hss}} \if@twocolumn
\twocolumn[\@maketitle] \else \newpage \global\@topnum\z@
\@maketitle \fi\thispagestyle{plain}\@thanks
\endgroup
\setcounter{footnote}{0}
\let\maketitle\relax
\let\@maketitle\relax
\gdef\@thanks{}\gdef\@author{}\gdef\@title{}\let\thanks\relax}
\def\.@{\char'76}
\def\.@{\char'76}
\DeclareMathAlphabet{\cmttl}{OT1}{cmtt}{n}{sl}
\newcommand{\tlex}{\cmttl{lex}}
\newcommand{\mdeg}{{\rm mdeg} }
\newcommand{\LM}{\operatorname{LM}}
\newcommand{\LC}{\operatorname{LC}}
\newcommand{\LT}{\operatorname{LT}}
\newcommand{\TM}{\operatorname{TM}}
\newcommand{\TC}{\operatorname{TC}}
\newcommand{\LMl}{\operatorname{LM}_{\tlex}}
\newcommand{\LCl}{\operatorname{LC}_{\tlex}}
\newcommand{\LTl}{\operatorname{LT}_{\tlex}}
\newcommand{\TCl}{\operatorname{TC}_{\tlex}}
\newcommand{\TT}{\operatorname{TT}}
\newcommand{\Id}{{\operatorname{Id}}}
\newcommand{\sotq}[2]{\{\,#1\mathrel;\allowbreak#2\,\}}
\newcommand{\ord}{\prec}
\newcommand{\lcm}{\operatorname{lcm}}
\newcommand{\Pol}{\mathbf{R}[X_1, \dots, X_n]}
\newcommand{\PolY}{\mathbf{A}[Y_1, \dots, Y_p]}
\newcommand{\Polk}{\mathbf{R}[X_1^k, \dots, X_n^k]}
\newcommand{\LPol}{\mathbf{R}[X_1^{\pm 1}, \dots, X_n^{\pm 1}]}
\newcommand{\SCR}{\mathbf{R}_{\prec}\langle X_1, \dots, X_n\rangle}
\newcommand{\SCRl}{\mathbf{R}_{\tlex}\langle X_1, \dots, X_n\rangle}
\newcommand{\RX}{\mathbf{R}[X]}
\newcommand{\SRX}{\mathbf{R} \langle X \rangle}
\newcommand{\Rk}{\mathcal{R}_k}
\newcommand{\MatZ}{\mathbb{Z}^{n \times n}}
\newcommand{\MatZp}{\mathbb{Z}_{\ge 0}^{n \times n}}
\def \n{\noindent}
\def \RR{\mathbb{R}}
\def \QQ{\mathbb{Q}}
\def \NN{\mathbb{N}}
\def \ZZ{\mathbb{Z}}
\def \mod {{\rm mod}}
\def\gen#1{\langle{#1}\rangle}
\def \A{{\bf A}}
\def \B{{\bf B}}
\def \V{{\bf V}}
\def \R{{\bf R}}
\def \K{{\bf K}}
\def \B{{\bf B}}
\def \noi {\noindent}
\def \ss {\smallskip}
\def \sni {\ss\noi}
\def \bs {\bigskip}
\def \bni {\bs\noi}
\newcommand{\upto}{,\ldots,}
\begin{document}

\maketitle

\begin{abstract}
It is well-known that for any commutative unitary ring $\R$,
the Serre conjecture ring $\SRX$, i.e., the localization of the univariate polynomial ring $\RX$ at monic polynomials,
is a B\'ezout domain of Krull dimension $\leq 1$  if  so is  $\R$. Consequently, defining by induction
$\R\gen{X_1,\ldots,X_n} := (\R\gen{X_1,\ldots,X_{n-1}}) \gen{X_n}$,
the ring $\R\gen{X_1,\ldots,X_n}$  is a B\'ezout domain of Krull dimension $\leq 1$ if so is $\R$.
The fact that $\R\gen{X_1,\ldots,X_n}$ is a B\'ezout domain when $\R$ is a valuation domain of Krull dimension $\leq 1$ was
the cornerstone of Brewer and Costa's theorem stating that if $\R$ is a one-dimensional arithmetical ring
then finitely generated projective $\Pol$-modules are extended.
It is also the key of the proof of the  Gr\"obner Ring Conjecture in the lexicographic order case,
namely the fact that for any valuation domain $\R$ of Krull dimension $\leq 1$,
any $n \in \NN_{>0}$, and any finitely generated ideal $I$ of  $\R[X_1,\ldots, X_n]$,
the ideal $\LT(I )$ generated by the
leading terms of the elements of $I$ with respect to the lexicographic monomial order is finitely generated.
Since the ring $\R\gen{X_1,\ldots,X_n}$ can also be   defined directly as the localization of the multivariate polynomial ring $\Pol$
at polynomials whose leading coefficients according to the lexicographic monomial order with
$X_1 < X_2 < \cdots < X_n$ is $1$, we propose to  generalize the fact that $\R\gen{X_1,\ldots,X_n}$ is
a B\'ezout domain of Krull dimension $\leq 1$  if so is $\R$ to any rational monomial
order, bolstering the evidence for the Gr\"obner Ring Conjecture in the rational case.
We give an example showing that this result is no more true in the irrational case.
\end{abstract}

\bni MSC 2020: Primary 13B25, Secondary 13B30, 13F05

\sni Key words: rational monomial order, irrational monomial order, leading terms ideal, valuation domains, Gr\"obner ring conjecture.

\section*{Acknowledgement}

The second author thanks the Alexander von Humboldt Foundation
for funding his stay at the Technische Universit\"at M\"unchen,
during which the research for this paper was done.
He also thanks Gregor Kemper for fruitful and interesting conversations.

\section*{Introduction} \label{sec Introduction}

This paper is written in the framework of Bishop style constructive mathematics (see~\cite{Bi67,LQ,MRR,Y5}).
All the considered rings are supposed to be  commutative, unitary,
and strongly discrete (i.e., equipped with a membership test to finitely generated ideals).

\medskip

\n We will be working with monomial orders on the polynomial ring
$\Pol$ over a ring $\R$. According to \cite[Theorem~1.2]{Kemper:Trun:VanAnh:2017},
every monomial preorder~${\ord}$ is given by a matrix
$M \in \mathbb{R}^{m \times n}$ for some $m > 0$ in the following way:

\[
  X_1^{e_1} X_2^{e_2} \cdots X_n^{e_n} {\ord} \, X_1^{f_1} X_2^{f_2} \cdots
  X_n^{f_n} \qquad \Longleftrightarrow \qquad M \cdot
  \begin{pmatrix} e_1 \\ \vdots \\ e_n \end{pmatrix} <_{\tlex} M \cdot
  \begin{pmatrix} f_1 \\ \vdots \\ f_n \end{pmatrix}
\]
($e_i,f_i \in \mathbb{N}$), where $\tlex$ denotes the lexicographic order.
A given matrix $M$ defines a monomial preorder if and only if all
columns are nonzero and their first nonzero entry is positive. Notice
that different matrices may define the same monomial preorder. For
example, adding a multiple of a row of $M$ to a lower row does not
change the preorder. By doing this repeatedly, we can replace $M$ by a matrix
with nonnegative entries, so from now on we will assume that
$M \in \RR_{\ge 0}^{m \times n}$. Let us call a preorder {\em rational} if it can be defined by a matrix with rational entries,
which can then be assumed to be nonnegative integers. All monomial
orders used in practice are rational. A rational preorder is a
monomial order (i.e., there are no ties between monomials) if and only
if $M$ has rank~$n$, so in this case we may assume $m = n$. By
contrast, irrational preorders can be orders even if $m < n$, for
example if $M$ consists of a single row of real numbers that are
linearly independent over $\QQ$. A monomial preorder is said to be {\em graded}
if the first row of $M$ has only positive entries. Graded rational monomial orders were used
in \cite{KY} to give an effective characterization of the valuative dimension.

\medskip

\n {\bf The Gr\"obner ring conjecture (the updated version, \cite[page 535]{Y-JA21}) :}

\medskip

\n A valuation domain $\R$ has Krull dimension $\leq 1$ if and only if for any $n \in \NN_{>0}$,
for any rational monomial order ${\ord}$ on $\Pol$, and   any finitely generated ideal $I$ of  $\Pol$,
the ideal $\LT_{\ord}(I)$ generated by the leading terms of the elements of $I$ with respect to $\ord$ is finitely generated.

\medskip

It is well-known that for any ring $\R$, the Serre conjecture ring $\SRX$, i.e., the localization of the univariate polynomial ring $
\RX$ at monic polynomials,   is a B\'ezout domain of Krull dimension $\leq 1$  if  so is  $\R$.
Consequently, defining by induction $\R\gen{X_1,\ldots,X_n} := (\R\gen{X_1,\ldots, X_{n-1}})\gen{X_n}$,
the ring $\R\gen{X_1,\ldots,X_n}$  is a B\'ezout domain of Krull dimension $\leq 1$  if so is  $\R$.
The fact that $\R\gen{X_1,\ldots,X_n}$  is a B\'ezout domain when   $\R$ is a valuation domain of Krull dimension $\leq 1$ was
 the cornerstone of  Brewer and Costa theorem \cite[Corollary 3]{BC}  stating that if $\R$ is
a one-dimensional arithmetical ring then finitely generated projective $\Pol$-modules are extended on the one hand,
and on the other, the key of the proof of the  Gr\"obner Ring Conjecture in the lexicographic order case \cite[Theorem 4]{Y4}.
Since the ring $\R\gen{X_1,\ldots,X_n}$ can also be   defined directly as the localization of
the multivariate polynomial ring $\Pol$ at polynomials whose leading coefficients according  to the lexicographic monomial order with
$X_1 < X_2 < \cdots < X_n$ is $1$, we propose to generalize the aforementioned  result to any rational monomial
order, bolstering the evidence for the Gr\"obner Ring Conjecture in the rational case.
We show with Example \ref{CounterExample} below that this result is no more true in the irrational case.

\section{The leading terms ideal over a valuation domain}\label{s1}

\subsection{A Reminder}\label{s111}

\begin{definition}
\label{def10100} ~

{\rm Let $\R$ be a ring, consider a monomial order ${\ord}$ on $\Pol$, and let $f=\sum_{i=0}^m a_{i} X^{\alpha_i}$
be a nonzero polynomial in $\Pol$ with
$X^{\alpha_0} {\ord} \cdots {\ord}\, X^{\alpha_m}$ and $a_{i} \in \R \setminus \{ 0 \}$.

 \begin{description}
\item [{\rm (1)}] The $X^{\alpha_i}$ (resp. the $a_{_i}X^{\alpha_i}$) are
 called the {\it monomials} (resp. the {\it terms}) of $f$.

\item [{\rm (2)}] The {\it multidegree} of $f$ is $\mdeg_{\ord}(f):=\alpha_m \in \NN^n$.

\item [{\rm (3)}]  The {\it leading coefficient} of $f$ is $\LC_{\ord}(f) := a_{m} \in \R\setminus \{ 0 \}$.

\item [{\rm (4)}]  The multivariate polynomial $f$ is said to be ${\ord}$-monic if $\LC_{\ord}(f)=1$.

\item [{\rm (5)}] The {\it leading monomial} of $f$ is $\LM_{\ord}(f) := X^{\alpha_m}$.

\item [{\rm (6)}] The {\it leading term} of $f$ is $\LT_{\ord}(f) := \LC_{\ord}(f) \, \LM_{\ord}(f)$.

\item [{\rm (7)}] The {\it leading terms}  of a  set $F\subseteq \Pol$ is $\LT_{\ord}(F) := \{\LT_{\ord}(f);\;f \in F\setminus \{ 0 \}  \}$.

\item [{\rm (8)}] The {\it leading terms ideal}  of a nonzero finitely generated ideal $I$ of
 $\R[X_1,\ldots,
X_n]$ is $\LT_{\ord}(I) := \gen{\LT_{\ord}(f);\; f \in I\setminus \{ 0 \}}$ (an ideal of $\Pol$).

\item [{\rm (9)}] The {\it leading coefficient ideal}  of a nonzero finitely generated ideal $I$ of
 $\Pol$ is $\LC_{\ord}(I) := \gen{\LC_{\ord}(f);\; f \in I\setminus \{ 0 \}}$ (an ideal of $\R$)
\end{description}

\n It is sometimes convenient to extend a monomial order to the monomials in
the Laurent polynomial ring $\R[X_1^{\pm 1} \upto X_n^{\pm
  1}]$. If~${\ord}$ is a monomial ordering and
$f \in \R[X_1^{\pm 1} \upto X_n^{\pm 1}]$ is a nonzero Laurent
polynomial, we write $\LM_{\ord}(f)$ for its leading monomial,
$\LC_{\ord}(f)$ for the coefficient of this monomial, and $\LT_{\ord}(f):=\LC_{\ord}(f) \, \LM_{\ord}(f)$.
Likewise, we define the \emph{trailing coefficient} $\TC_{\ord}(f)$, the \emph{trailing monomial} $\TM_{\ord}(f)$
and the \emph{trailing term} $\TT_{\prec}(f)$ of a Laurent polynomial $f$.

\medskip
\n We will simply write $\mdeg$,  $\LC$, $\LM$, or $\LT$ (resp. $\TC$, $\TM$, or $\TT$) when there is no ambiguity.
}
\end{definition}

\begin{definition}{\rm

~\begin{itemize}


\item [{\rm (1)}] A nontrivial  ring $\R$ is \emph{local}  if for every element $x\in \R$, either~$x$ or~$1+x$ is invertible.

\item [{\rm (2)}] A  ring $\R$ is a \emph{valuation ring}\footnote{Here we follow Kaplansky's definition:
$\R$ may have nonzero zero-divisors.}  if  every two elements are comparable
 w.r.t.\ division (with explicit divisibilty test), i.e.\ if, given $a,b \in \R$,
either there exists $ c \in \R$ such that $b=ac$ or  there exists $ d \in \R$ such that $a=bd$ ($\R$ may have nonzero zero-divisors).
A valuation ring is a local ring. A typical example of a valuation domain which is not a field is
$\ZZ_{p\ZZ}:=\{\frac{a}{b} \in \QQ \mid a \in \ZZ \;{\rm and}\; b \in \ZZ \setminus p\mathbb{Z}\}$,
where $p$ is a prime number. A typical example of a valuation ring with nonzero zero-divisors is
$\ZZ/p^{\alpha}\ZZ$, where $p$ is a prime number and $\alpha \geq 2$.

\item [{\rm (3)}] A ring $\R$ is \emph{arithmetical} if  it is locally a valuation ring.
This is equivalent to the fact that it satisfies the following property:

\smallskip \noindent
\begin{equation} \label{eqari}
\forall x,y\in\R\quad \exists \, t,a,b \in\R\;\left\{
\begin{array}{rcl}
        (1-t)\,x & = & a\,y    \\
        b\,x & = & t\,y\\
\end{array}
\right.\quad \quad \quad \quad
\end{equation}

\n Thus, $x$ divides $y$ in the ring $\R_t:=\R[\frac{1}{t}]$ and $y$ divides $x$
in the ring $\R_{1-t}:=\R[\frac{1}{1-t}]$ (we see here that, locally, $\R$ behaves like a valuation ring).

\smallskip

\n An integral domain is called a \emph{Pr\"ufer domain}\index{Pr\"ufer domain} if it is
arithmetical.

\smallskip

\n A ring $\R$ is \emph{B\'ezout} if each finitely generated ideal is principal. It is well-known that
an integral domain $\R$ is B\'ezout if and only if it is both a Pr\"ufer and a gcd domain.
\end{itemize}
}
\end{definition}



\begin{definition} \label{def222Spaire} {\rm \cite{Y5,Y6}  Let $\R$ be a  valuation domain,
 and ${\ord}$ a
monomial order on $\Pol$.

\begin{description}
  \item [{\rm (1)}] Let $f, g \in \R[X_1,\ldots, X_n] \setminus \{ 0 \}$.
If $\mdeg(f)= \alpha$ and ${\rm
mdeg}(g)=\beta$ then let $\gamma=(\gamma_1,\ldots,\gamma_n)$, where

\n $\gamma_i= \max (\alpha_i,\beta_i)$ for each $i$.
Perform the test $\;$ $\LC(f) \mid \LC(g)\;$ or $\;$ $\LC(g) \mid \LC(f)$.


$S_{\ord}(f,g)= \frac{X^\gamma}{\LM(f)} f-\frac{{\rm
LC}(f)}{\LC(g)}\frac{X^\gamma}{\LM(g)}g$ $\;$ if $\;$
$\LC(g)$ $\;$ divides $\;$ $\LC(f)$.

 $S_{\ord}(f,g)= \frac{{\rm
LC}(g)}{\LC(f)} \frac{X^\gamma}{\LM(f)}
f-\frac{X^\gamma}{\LM(g)}g$ $\;$ if $\;$ $\LC(f)$ $\;$
divides $\;$ $\LC(g)$ and $\;$ $\LC(g)$ $\;$ does not
divide $\;$ $\LC(f)$.

\n $S_{\ord}(f,g)$ is called the {\it $S$-polynomial}
of $f$ and $g$. It is ``designed" to produce cancellation of leading
terms. We convene that $S_{\ord}(0,h)=S_{\ord}(h,0)=0$ for any  $h \in \R[X_1,\ldots, X_n]$.

\n We will simply write $S(f,g)$ when there is no ambiguity.
\end{description}
}
\end{definition}

The following definition was given in \cite{LNY} for coherent rings. We will specify it for valuation domains.

\begin{definition}\label{defdef2} {\rm Let $\R$ be a valuation domain and $\ord$~a monomial order on~$\Pol$. Let \(f_1,\dots,f_p\in \Pol\).
The \emph{$S_{\ord}$-set} of $(f_1,\dots,f_p)$ is


$$
S_{\ord}(f_1,\dots,f_p) := \sotq {S_{\ord}(f_i,f_j)}{1 \leq i < j \leq p} \subseteq \gen{f_1,\dots,f_p}.
$$
\n By induction, we define the {\it iterated $S_{\ord}$-sets} by
$S_{\ord}^0(f_1,\dots,f_p):=\{f_1,\dots,f_p \}$, $S_{\ord}^1(f_1,\dots,f_p):=\{f_1,\dots,f_p  \} \cup S_{\ord}(f_1,\dots,f_p)$, and
$S_{\ord}^{q+1}(f_1,\dots,f_p):=S_{\ord}^{q}(f_1,\dots,f_p)\cup S_{\ord}(S_{\ord}^{q}(f_1,\dots,f_p))$.

\n We also define $$S_{\ord}^{\infty}(f_1,\dots,f_p):= \mathop{\bigcup\big\uparrow}_{q \in \NN}\, S_{\ord}^q(f_1,\dots,f_p).$$


\n We will simply write $S^{q}(f_1,\dots,f_p)$ when there is no ambiguity.

}
\end{definition}

\medskip

The following theorem was given in \cite{LNY} for coherent rings. We will specify it for valuation domains.
For a proof, the reader can refer to the proof of Theorem 10 in \cite{BAY} (it is about the trailing terms ideal
but can easily be adapted to the leading terms ideal).

\begin{theorem} \label{tPrelim}
  \label{trailing}  Let $\R$ be a  valuation domain and ${\ord}$~a monomial order on~$\Pol$.  Let \(f_1,\dots,f_p\in \Pol\)
not all zero. For any \(h\in\gen{f_1,\dots,f_p}\) there exists
  $q \in \mathbb{N}$ and $g_1,\ldots,g_{t} \in S^{q}(f_1,\dots,f_p)$ such that $ \LT(h)\in \gen{\LT(g_1),\dots,\LT(g_{t})}$.
In other words,
$$\LT(\gen{f_1,\dots,f_p})= \gen{\LT(S^{\infty}(f_1,\dots,f_p))}.$$
\end{theorem}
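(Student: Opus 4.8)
The plan is to prove the two assertions in tandem, since the displayed equality $\LT(\gen{f_1,\dots,f_p}) = \gen{\LT(S^\infty(f_1,\dots,f_p))}$ is an immediate consequence of the first sentence: the inclusion $\supseteq$ holds because each $g \in S^\infty(f_1,\dots,f_p)$ lies in $\gen{f_1,\dots,f_p}$ (each $S$-polynomial of members of the ideal stays in the ideal, by induction on the iterated $S$-sets), and the inclusion $\subseteq$ is exactly the first sentence applied to an arbitrary $h \in \gen{f_1,\dots,f_p}$. So the whole content is: given $h \in \gen{f_1,\dots,f_p}$, produce $q$ and finitely many $g_i \in S^q(f_1,\dots,f_p)$ with $\LT(h) \in \gen{\LT(g_1),\dots,\LT(g_t)}$.

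First I would write $h = \sum_{i=1}^p u_i f_i$ with $u_i \in \Pol$ and consider the multidegrees $\delta_i := \mdeg(u_i f_i)$ of the nonzero summands. Let $\delta := \max_{\ord}\{\delta_i\}$ be the largest leading monomial appearing among the $u_i f_i$. If $\LM(h) = X^\delta$, i.e.\ there is no cancellation of the top monomial, then $\LT(h)$ is an $\R$-combination of those $\LT(u_i f_i) = \LT(u_i)\LT(f_i)$ with $\delta_i = \delta$; since $\R$ is a valuation domain, $\LC(h)$ is divisible in $\R$ by one of the $\LC(f_i)$ with $\delta_i=\delta$ (the one with minimal leading coefficient under divisibility), so already $\LT(h) \in \gen{\LT(f_i)} \subseteq \gen{\LT(S^0)}$ and we are done with $q=0$. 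The real work is the cancellation case $\LM(h) \ord X^\delta$: here I would invoke the standard ``$S$-polynomial rewriting'' argument adapted to valuation domains. Group the indices $i$ with $\delta_i = \delta$; because $\R$ is a valuation ring, its leading coefficients are totally ordered by divisibility, and one rewrites the corresponding part $\sum_{\delta_i=\delta} \LT(u_i) f_i$ of the sum as a telescoping $\R[X]$-combination of the $S$-polynomials $S(f_i,f_j)$ of consecutive terms (in the divisibility ordering), each of which has multidegree strictly below $\delta$, plus a single leftover term $c \cdot X^{\delta - \mdeg(f_{i_0})} f_{i_0}$ whose leading coefficient $c\,\LC(f_{i_0})$ is precisely the coefficient of $X^\delta$ in $\sum_{\delta_i=\delta}\LT(u_i)f_i$. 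If that leftover coefficient is nonzero it would contradict $\LM(h)\ord X^\delta$ (after noting the other summands cannot restore an $X^\delta$ term), so it vanishes; substituting back expresses $h$ as an $\Pol$-combination of the original $f_i$ with $\delta_i < \delta$ together with the $S(f_i,f_j) \in S^1(f_1,\dots,f_p)$, all of strictly smaller top multidegree.

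Then I would set up the induction. The inductive parameter is $\max_{\ord}\{\mdeg(u_i f_i)\}$ over all representations of $h$ using generators drawn from a fixed iterated $S$-set $S^q(f_1,\dots,f_p)$; the rewriting step above strictly decreases this parameter at the cost of passing from $S^q$ to $S^{q+1}$. Since $\ord$ is a monomial order — hence a well-order on $\NN^n$ — this descent terminates after finitely many steps, at which point we land in the no-cancellation case and conclude $\LT(h) \in \gen{\LT(g_1),\dots,\LT(g_t)}$ for finitely many $g_i \in S^q(f_1,\dots,f_p)$ with the appropriate $q$. The main obstacle, and the place where valuation-domain-specific care is needed, is the rewriting step: over a general ring one cannot cancel a combination of leading terms into $S$-polynomials, but the total division order on $\R$ lets one order the competing leading coefficients $\LC(f_i)$ linearly and peel them off one pair at a time exactly as in Buchberger's argument, using the explicit form of $S_{\ord}(f,g)$ from Definition~\ref{def222Spaire}. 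As noted, rather than reproduce this in full one may cite the proof of \cite[Theorem~10]{BAY}, which carries out precisely this argument for the trailing terms ideal; the adaptation to leading terms is obtained by reversing the order $\ord$, and the statement for coherent rings appears in \cite{LNY}, of which valuation domains are a special case.
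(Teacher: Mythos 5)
Your proposal is correct and takes essentially the approach the paper relies on: the paper gives no inline proof but delegates to the Buchberger-style $S$-polynomial argument of \cite{BAY} (Theorem 10, stated for trailing terms) and \cite{LNY}, which is exactly the cancellation-and-descent scheme you reconstruct --- handle the no-cancellation case via the total divisibility order on the leading coefficients, telescope the top-degree part into monomial multiples of $S$-polynomials otherwise, and descend on the maximal multidegree of a representation while enlarging $S^{q}$ to $S^{q+1}$. One minor quibble: the passage from the trailing-terms statement to the leading-terms one is by mirroring the argument, not literally by ``reversing $\ord$'' (the reverse of a monomial order is not a monomial order), but this side remark does not affect your main argument.
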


\subsection{The case of rational monomial orders }\label{s31}

We now give four easy but important lemmas. For
matrix $M = (\alpha_{i,j}) \in \MatZ$, we define the
ring endomorphism
\[
  \phi_M\mbox{:}\ \LPol \to \LPol,\,  X_i \mapsto \prod_{j=1}^n X_j^{\alpha_{j,i}},
\]

\n which restricts to an endomorphism
\[
  \phi_M\mbox{:}\ \Pol \to \Pol
\]

\n whenever $M \in\MatZp$.

\begin{lemma} \label{LemPhiM}%
  Let $\R$ be a  ring and~$\ord$ be a rational monomial order on
  $\LPol$, given by a matrix
  $M \in \MatZ$. Then for
  $f \in \LPol \setminus \{ 0 \}$, we have
  \[
  \phi_M(\LT_{\ord}(f)) = \LTl\bigl(\phi_M(f)\bigr).
  \]
\end{lemma}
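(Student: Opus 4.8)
The plan is to unwind the definitions on both sides and compare. Recall that $\ord$ is defined by the matrix $M\in\MatZ$ via the rule that $X^e\ord X^f$ iff $Me<_{\tlex}Mf$, and that $\phi_M$ sends the monomial $X^e = X_1^{e_1}\cdots X_n^{e_n}$ to $\prod_i\bigl(\prod_j X_j^{\alpha_{j,i}}\bigr)^{e_i} = X^{Me}$, where $Me$ is the usual matrix-vector product (here $e=(e_1,\dots,e_n)^{\mathrm t}$). Thus $\phi_M$ is the monomial map whose effect on exponent vectors is multiplication by $M$. First I would record this: for any exponent vector $e$, $\phi_M(X^e)=X^{Me}$, and more generally $\phi_M(c\,X^e)=c\,X^{Me}$ for a scalar $c$.

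Next I would observe that, because $M$ has rank $n$ (it defines a monomial \emph{order}, not merely a preorder, on $n$ variables), the map $e\mapsto Me$ from $\ZZ^n$ to $\ZZ^n$ is injective. Hence if $f=\sum_{i=0}^m a_i X^{\alpha_i}$ with the $a_i$ nonzero and the exponents $\alpha_i$ pairwise distinct, then $\phi_M(f)=\sum_{i=0}^m a_i X^{M\alpha_i}$ again has pairwise distinct monomials with the same nonzero coefficients; no collapsing of terms occurs. Moreover the defining rule for $\ord$ says precisely that $X^{\alpha_i}\ord X^{\alpha_j}\iff M\alpha_i<_{\tlex}M\alpha_j$, i.e.\ $\ord$ on the monomials of $f$ corresponds, term by term, to $\tlex$ on the monomials of $\phi_M(f)$. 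Therefore the $\ord$-largest monomial of $f$, which is $X^{\alpha_m}=\LM_{\ord}(f)$, maps under $\phi_M$ to the $\tlex$-largest monomial of $\phi_M(f)$, namely $\LMl(\phi_M(f))=X^{M\alpha_m}$, and the corresponding coefficient is unchanged: $\LC_{\ord}(f)=a_m=\LCl(\phi_M(f))$. Combining, $\phi_M(\LT_{\ord}(f))=\phi_M(a_m X^{\alpha_m})=a_m X^{M\alpha_m}=\LTl(\phi_M(f))$, which is the claim. The same argument applies verbatim to Laurent polynomials, since $\phi_M$ and the exponent arithmetic make sense over $\ZZ^n$.

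I do not anticipate a serious obstacle here; the statement is essentially a bookkeeping identity. The only point requiring a little care is the injectivity of $e\mapsto Me$ on $\ZZ^n$ — without it, distinct monomials of $f$ could be identified by $\phi_M$ and cancel, breaking the term-by-term correspondence — so I would make sure to invoke (from the discussion preceding the lemma) that a rational monomial order on $\Pol$ is given by an $M$ of rank $n$, which may be taken square and invertible over $\QQ$; over $\ZZ$ it is then injective. A secondary minor point is to confirm that $\phi_M$ restricted to this setting still outputs Laurent monomials with integer exponents, which is immediate since $M$ has integer entries, and that the lexicographic comparison on the image exponent vectors in $\ZZ^n$ agrees with the one used to define $\tlex$ on $\LPol$; both are just the standard lexicographic order on $\ZZ^n$, so there is nothing to check beyond noting the conventions match.
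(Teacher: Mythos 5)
Your proof is correct and is essentially the paper's own argument spelled out in detail: the paper's entire proof is the observation that $\phi_M$ is strictly increasing on monomials (since $X^e \ord X^f$ iff $Me <_{\tlex} Mf$ and $\phi_M(X^e)=X^{Me}$), which is exactly the term-by-term order correspondence and non-collapsing of monomials you verify. Your extra care about injectivity is fine, though it also follows directly from strict monotonicity of $e\mapsto Me$ with respect to the total orders involved, without separately invoking rank $n$.
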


\begin{proof}{Proof.} This is because~$\phi_M$ is strictly increasing on monomials.
\end{proof}

\begin{notation} \label{serr}{\rm We denote by $\tlex$ the lexicographic monomial order with
$X_1 < X_2 < \cdots < X_n$. Let $\R$ be a ring and consider a monomial order $\ord$ on $\Pol$. The set
$$S_{\ord}:=\{ f \in \Pol \; \mid \; \LC_{\ord}(f)=1 \}$$

\n of $\ord$-monic polynomials  is a multiplicative subset of the ring $\Pol$. We set

$$\SCR:=S_{{\ord}}^{-1}\Pol.$$

\n In case $n=1$, this ring is the same as the classical ``Serre's conjecture ring"  $\SRX$ (recall that in the univariate case the only
monomial order is the natural one).}
\end{notation}

\begin{lemma} \label{LemPhiL}
  Let $\R$ be a ring and~$\ord$ be a rational monomial order on $\LPol$, given by a rank $n$ matrix
  $M = (\alpha_{i, j})\in \MatZ$. Let $k \in \NN_{> 0}$ be such that $L := k M^{-1}\in \MatZ$ and write $ L = (\beta_{i, j})$.
Let $\A : = \SCR$ and $\B := \SCRl$.
Then the maps $\left\{ X_i \mapsto \prod_{j=1}^n X_j^{\alpha_{j,i}} \right\}$ and
$\left\{ X_i \mapsto \prod_{j=1}^n X_j^{\beta_{j,i}}\right\}$ induce injective ring homomorphisms
$\phi_M:\A \to \B$ and $\phi_L:\B \to \A$ which satisfy
$\phi_M \circ \phi_L = \phi_{k \cdot \Id_n}$ and $\phi_L \circ \phi_M = \phi_{k \cdot \Id_n}$.
\end{lemma}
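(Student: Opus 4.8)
The plan is to build $\phi_M$ and $\phi_L$ from the monomial substitutions specified, and then check the four required properties: well-definedness (i.e.\ that each map sends the relevant multiplicative set into units of the target), injectivity, and the two composition identities. First I would record the key computational input: since $M$ has rank $n$ and $L = kM^{-1}$, we have $ML = LM = k\cdot\Id_n$, so the substitution $X_i \mapsto \prod_j X_j^{\alpha_{j,i}}$ on the Laurent polynomial ring $\LPol$ followed by $X_i \mapsto \prod_j X_j^{\beta_{j,i}}$ is the substitution $X_i \mapsto X_i^k$, and likewise in the other order. This is just matrix bookkeeping on exponent vectors, so I would state it and not belabour it. Throughout I would freely use Lemma \ref{LemPhiM}: for the order $\ord$ defined by $M$ (on $\LPol$), $\phi_M$ intertwines $\LT_{\ord}$ with $\LTl$; symmetrically, the lexicographic order $\tlex$ on $\LPol$ corresponds, under $\phi_L$, to the order defined by the matrix $L$, and since $L = kM^{-1}$ defines (after clearing the positive scalar $k$, which does not change the order) the same monomial order as $M^{-1}\cdot$, one sees that $\phi_L$ intertwines $\LTl$ with $\LT_{\ord}$.

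The main step is well-definedness. To see that $\phi_M$ descends to a map $\A = \SCR \to \B = \SCRl$, it suffices to show that $\phi_M$ sends every $\ord$-monic polynomial $f \in \Pol$ to a $\tlex$-monic polynomial of $\Pol$, hence to a unit of $\B$; then the universal property of localization gives the ring homomorphism $\phi_M\colon \A \to \B$. But $\LT_{\ord}(f)$ is a monomial (coefficient $1$) exactly when $\LC_{\ord}(f) = 1$, and by Lemma \ref{LemPhiM}, $\LTl(\phi_M(f)) = \phi_M(\LT_{\ord}(f)) = \phi_M(\LM_{\ord}(f))$, which is again a monomial with coefficient $1$; so $\phi_M(f)$ is $\tlex$-monic. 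Symmetrically, $\phi_L$ sends $\tlex$-monic polynomials to $\ord$-monic polynomials (using the intertwining of $\phi_L$ with $\LTl$ and $\LT_{\ord}$ noted above), hence induces $\phi_L\colon \B \to \A$. One small point to handle carefully here is that $\phi_M$ and $\phi_L$ as written are defined on the Laurent ring, but since $M, L$ have nonnegative-or-general integer entries the images of $X_i$ may contain negative exponents; however, we only ever feed in honest polynomials from $\Pol \subseteq \A$, and the target localization $\B$ inverts $\tlex$-monic polynomials — in particular it inverts each $X_i$, because $X_i$ itself is $\tlex$-monic — so $\phi_M(X_i)$ and its inverse both lie in $\B$, and likewise for $\phi_L$. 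Thus both maps land where claimed.

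For the composition identities, I would compute $\phi_M \circ \phi_L$ and $\phi_L \circ \phi_M$ on the generators $X_i$ using $ML = LM = k\,\Id_n$: both composites send $X_i \mapsto X_i^k$, i.e.\ both equal $\phi_{k\cdot\Id_n}$ (now regarded as an endomorphism of $\A$, resp.\ of $\B$, which makes sense since $X_i^k$ is $\ord$-monic, resp.\ $\tlex$-monic). Injectivity of $\phi_M$ and $\phi_L$ is then immediate from these identities: if $\phi_M(a) = 0$ then $\phi_L(\phi_M(a)) = \phi_{k\cdot\Id_n}(a) = 0$, and $\phi_{k\cdot\Id_n}$ is injective because the substitution $X_i \mapsto X_i^k$ is injective on the monomial basis (it is strictly increasing on monomials for $\ord$, or simply: distinct exponent vectors go to distinct exponent vectors), and localization at a multiplicative set of nonzerodivisors — here $\R$ is a domain, so $\Pol$ is a domain and all these sets consist of nonzerodivisors — is injective, so $\phi_{k\cdot\Id_n}$ stays injective after localizing; hence $a = 0$. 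The same argument with the roles swapped gives injectivity of $\phi_L$.

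The part I expect to be the main obstacle, or at least the part needing the most care, is matching the order defined by the matrix $L = kM^{-1}$ with the lexicographic order under the substitution $\phi_L$ — that is, verifying that Lemma \ref{LemPhiM} really does apply in the reversed direction to give the intertwining of $\phi_L$ with $\LTl$ and $\LT_{\ord}$. Concretely one must check: for monomials $X^e \prec_{\tlex} X^f$ (equivalently $e <_{\tlex} f$), the images satisfy $\phi_L(X^e) \prec_{\ord} \phi_L(X^f)$, i.e.\ $M(Le) <_{\tlex} M(Lf)$, which holds because $ML = k\,\Id_n$ with $k > 0$, so $M(Le) = ke <_{\tlex} kf = M(Lf)$. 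This shows $\phi_L$ is strictly increasing from $(\text{monomials}, \prec_{\tlex})$ to $(\text{monomials}, \prec_{\ord})$, which is exactly the hypothesis needed to run the argument of Lemma \ref{LemPhiM} and conclude $\phi_L(\LTl(g)) = \LT_{\ord}(\phi_L(g))$. Once this symmetric form is in hand, everything else is routine.
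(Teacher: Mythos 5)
Your proof plan follows essentially the same route as the paper's (very terse) argument: regard $\A$ and $\B$ as localizations of the Laurent polynomial ring $\LPol$ (legitimate because the monomials lie in $S_{\ord}\cap S_{\tlex}$, a point you handle correctly when you worry about negative exponents in $\phi_M(X_i)$ and $\phi_L(X_i)$), use Lemma \ref{LemPhiM} --- together with its symmetric counterpart for $\phi_L$, which you rightly reduce to the computation $M(Le)=ke$ coming from $ML=LM=k\cdot\Id_n$ --- to see that the substitutions send monic elements to monic elements and hence descend to the localizations, and verify the composition identities on the generators. All of that is sound and is exactly what the paper leaves as ``readily checked''.

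The one genuine flaw is in your injectivity step, where you argue that ``$\R$ is a domain, so $\Pol$ is a domain and all these sets consist of nonzerodivisors''. The lemma is stated for an arbitrary commutative unitary ring $\R$; the paper expressly allows rings with zero-divisors (its valuation rings are taken in Kaplansky's sense, e.g.\ $\ZZ/p^{\alpha}\ZZ$), and Lemma \ref{LemPhiL} is invoked in the proof of Proposition \ref{PropKrullDim} for a general ring $\R$, so you cannot assume $\Pol$ is a domain. The repair is short and keeps your structure intact: any $f\in\LPol$ with $\LC_{\ord}(f)=1$ (or $\LC_{\tlex}(f)=1$) is a non-zero-divisor over \emph{any} ring, because for $g\neq 0$ the coefficient of $\LM_{\ord}(f)\,\LM_{\ord}(g)$ in $fg$ is exactly $\LC_{\ord}(f)\,\LC_{\ord}(g)=\LC_{\ord}(g)\neq 0$ (every other product of monomials of $f$ and $g$ is strictly smaller for $\ord$). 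Hence $S_{\ord}$ and $S_{\tlex}$ consist of regular elements, $\LPol$ embeds in both $\A$ and $\B$, and your argument --- injectivity of $\phi_{k\cdot\Id_n}$ on $\LPol$ because distinct exponent vectors have distinct images under the rank-$n$ matrix $k\cdot\Id_n$, transferred to the localizations, and then injectivity of $\phi_M$ and $\phi_L$ read off from the composition identities --- goes through verbatim. With that substitution the proof is complete and matches the paper's approach.
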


\begin{proof}{Proof.}
Since $S_{\ord} \cap S_{\tlex}$ contains the monomials of $\Pol$, we have $\A = S_{\ord}^{-1}\LPol$ and $\B = S_{\tlex}^{-1}\LPol$.
As the homomorphism $\phi_M$ of Lemma \ref{LemPhiM} satisfies $\phi_M(S_{\ord}) \subseteq S_{\tlex}$, it induces a homomorphism $A \rightarrow B$
which we also denote, abusively, by $\phi_M$. The remaining assertions are readily checked.
\end{proof}

\begin{lemma} \label{Spol}%
 Let $\R$ be a valuation domain, and ${\ord}$ a rational monomial order on
$\Pol$ given by a rank $n$ matrix $M \in \MatZp$.
Then for any $f,g \in \Pol \setminus \{ 0 \}$, there exists a monomial ${\cal N} \in \Pol$ such
that $$\phi_M(S_{\ord}(f,g))= {\cal N} \cdot S_{\tlex}( \phi_M(f), \phi_M(g)).$$

\n More precisely, ${\cal N}= \frac{
\phi_M\left(\lcm(\LM_{\ord}(f),\,\LM_{\ord}(g))\right)}{
\lcm\left(\phi_M(\LM_{\ord}(f)),\,\phi_M(\LM_{\ord}(g))\right)}$.
\end{lemma}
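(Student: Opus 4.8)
The plan is to unwind all the definitions and check the claimed identity term by term, using Lemma \ref{LemPhiM} to control what $\phi_M$ does to leading terms. First I would recall that, writing $\alpha = \mdeg_{\ord}(f)$, $\beta = \mdeg_{\ord}(g)$, and $\gamma_i = \max(\alpha_i,\beta_i)$, the $S$-polynomial is
\[
S_{\ord}(f,g) \;=\; \frac{X^{\gamma}}{\LM_{\ord}(f)}\,f \;-\; \frac{\LC_{\ord}(f)}{\LC_{\ord}(g)}\,\frac{X^{\gamma}}{\LM_{\ord}(g)}\,g
\]
in the case $\LC_{\ord}(g)\mid\LC_{\ord}(f)$ (the other case being symmetric, with the roles of $f$ and $g$ swapped, so it suffices to treat one). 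Since $\phi_M$ is a ring homomorphism, applying it gives
\[
\phi_M(S_{\ord}(f,g)) \;=\; \frac{\phi_M(X^{\gamma})}{\phi_M(\LM_{\ord}(f))}\,\phi_M(f) \;-\; \frac{\LC_{\ord}(f)}{\LC_{\ord}(g)}\,\frac{\phi_M(X^{\gamma})}{\phi_M(\LM_{\ord}(g))}\,\phi_M(g),
\]
where I have used that $\phi_M$ is multiplicative and hence sends the quotient of monomials $X^{\gamma}/\LM_{\ord}(f)$ to $\phi_M(X^{\gamma})/\phi_M(\LM_{\ord}(f))$ inside $\LPol$ (this is legitimate because $\phi_M$ extends to the Laurent ring, or simply because $\LM_{\ord}(f)\mid X^{\gamma}$ as monomials). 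Note $\phi_M$ fixes the coefficients $\LC_{\ord}(f),\LC_{\ord}(g)\in\R$, so the scalar factor is unchanged.

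Next I would compare this with $S_{\tlex}(\phi_M(f),\phi_M(g))$. By Lemma \ref{LemPhiM}, $\LT_{\tlex}(\phi_M(f)) = \phi_M(\LT_{\ord}(f))$, so $\LM_{\tlex}(\phi_M(f)) = \phi_M(\LM_{\ord}(f))$ and $\LC_{\tlex}(\phi_M(f)) = \LC_{\ord}(f)$, and likewise for $g$. In particular the divisibility test $\LC_{\tlex}(\phi_M(g))\mid\LC_{\tlex}(\phi_M(f))$ is the same test as $\LC_{\ord}(g)\mid\LC_{\ord}(f)$, so the two $S$-polynomials are built by the same branch of the case distinction. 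Writing $\delta_i = \max\bigl(\mdeg_{\tlex}(\phi_M(f))_i,\,\mdeg_{\tlex}(\phi_M(g))_i\bigr)$, we get
\[
S_{\tlex}(\phi_M(f),\phi_M(g)) \;=\; \frac{X^{\delta}}{\phi_M(\LM_{\ord}(f))}\,\phi_M(f) \;-\; \frac{\LC_{\ord}(f)}{\LC_{\ord}(g)}\,\frac{X^{\delta}}{\phi_M(\LM_{\ord}(g))}\,\phi_M(g).
\]
The key observation is that $X^{\delta} = \lcm\bigl(\phi_M(\LM_{\ord}(f)),\,\phi_M(\LM_{\ord}(g))\bigr)$ while $\phi_M(X^{\gamma}) = \phi_M\bigl(\lcm(\LM_{\ord}(f),\LM_{\ord}(g))\bigr)$, so the two expressions above differ exactly by the monomial factor ${\cal N} = \phi_M(X^{\gamma})/X^{\delta}$, which is the ${\cal N}$ in the statement. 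It remains to check that ${\cal N}$ is genuinely a monomial in $\Pol$ (not merely a Laurent monomial): this holds because $\LM_{\ord}(f)$ and $\LM_{\ord}(g)$ both divide $\lcm(\LM_{\ord}(f),\LM_{\ord}(g))$, hence $\phi_M(\LM_{\ord}(f))$ and $\phi_M(\LM_{\ord}(g))$ both divide $\phi_M(X^{\gamma})$, hence their lcm $X^{\delta}$ divides $\phi_M(X^{\gamma})$.

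The step I expect to require the most care is the bookkeeping that the "same branch" of the $S$-polynomial definition is selected on both sides and that nothing degenerates — in particular that $\phi_M(f)$ and $\phi_M(g)$ are nonzero (clear, since $\phi_M$ is injective on monomials so it cannot collapse a nonzero polynomial) and that the divisibility test transports correctly, which is exactly what Lemma \ref{LemPhiM} buys us via $\LC_{\tlex}(\phi_M(f)) = \LC_{\ord}(f)$. The arithmetic with exponent vectors ($\gamma$ versus $\delta$, and verifying ${\cal N}\in\Pol$) is routine once one notes $\phi_M$ is an order-preserving monoid homomorphism on monomials that commutes with taking lcm up to the stated correction factor. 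Everything else is a direct substitution.
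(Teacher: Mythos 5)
Your proposal is correct and follows essentially the same route as the paper's proof: apply $\phi_M$ to the explicit formula for $S_{\ord}(f,g)$, use Lemma \ref{LemPhiM} to identify $\phi_M(\LM_{\ord}(\cdot))$ with $\LM_{\tlex}(\phi_M(\cdot))$ (and to see the coefficient divisibility test, hence the branch, is unchanged), and observe that $\phi_M\bigl(\lcm(\LM_{\ord}(f),\LM_{\ord}(g))\bigr)$ is a common multiple of the transported leading monomials, so the quotient ${\cal N}$ by their lcm is a genuine monomial. Your extra remarks on branch bookkeeping and on ${\cal N}\in\Pol$ only make explicit what the paper leaves implicit.
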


\begin{proof}{Proof.}
Denote  $\LC_{\ord}(f)=a$,  $\LC_{\ord}(g)=b$,
${\cal M}=\lcm(\LM_{\ord}(f),\LM_{\ord}(g))$, ${\cal N}_1=\frac{{\cal M}}{\LM_{\ord}(f)}$,
${\cal N}_2=\frac{{\cal M}}{\LM_{\ord}(g)}$,  and suppose that $b$  divides $a$.
We have $S_{\ord}(f,g)= {\cal N}_1 \cdot f-\frac{a}{b}{\cal N}_2 \cdot g$, and thus
$\phi_M(S_{\ord}(f,g))= \phi_M({\cal N}_1) \cdot \phi_M(f)-
\frac{a}{b}\phi_M({\cal N}_2) \cdot \phi_M(g)$.
Now, as ${\cal N}_1 \cdot \LM_{\ord}(f)= {\cal N}_2 \cdot \LM_{\ord}(g)$,
we have $\phi_M({\cal N}_1) \cdot \phi_M(\LM_{\ord}(f))= \phi_M({\cal N}_2) \cdot \phi_M(\LM_{\ord}(g))$,
and thus, by virtue of Lemma \ref{LemPhiM}, $\phi_M({\cal N}_1) \cdot \LMl\bigl(\phi_M(f)\bigr) = \phi_M({\cal N}_2) \cdot \LMl\bigl(\phi_M(g)\bigr) =\phi_M({\cal M})$.
Since $\phi_M({\cal M})$ is a common multiple of
$\LMl\bigl(\phi_M(f)\bigr)$ and $\LMl\bigl(\phi_M(g)\bigr)$,
it is divisible by $\lcm(\LMl\bigl(\phi_M(f)\bigr),\LMl\bigl(\phi_M(g)\bigr))=:{\cal M}'$.
Setting ${\cal N}:= \frac{\phi_M({\cal M})}{{\cal M}'}$, we have  $ \phi_M(S_{\ord}(f,g))= {\cal N} \cdot S_{\tlex}( \phi_M(f), \phi_M(g))$.
\end{proof}

\begin{example}{\rm Let $f=Y+aX^2,\, g=X+bY^2 \in \R[X,Y]$, where $\R$ is a valuation domain and $a,b \in \R$ such that $a$ divides $b$.
Let us consider the graded lexicographic monomial order {\ttfamily  grlex} with $X>Y$.
Its corresponding matrix is $M= \left(\begin{smallmatrix} 1 & 1 \\ 1 &
        0 \end{smallmatrix}\right)$. We have:

        \medskip

        $S_{{\rm grlex}}(f,g)=\frac{b}{a}Y^2f-X^2g=\frac{b}{a}Y^3-X^3$,

        $\phi_M(X)=XY,\, \phi_M(Y)=X,\, \phi_M(f)=aX^2Y^2+X,\, \phi_M(g)=bX^2+XY$,

        $S_{\tlex}( \phi_M(f), \phi_M(g))=\frac{b}{a}\phi_M(f)-Y^2\phi_M(g)=\frac{b}{a}X-XY^3$, and finally

        $ \phi_M(S_{{\rm grlex}}(f,g))=\frac{b}{a}X^3-X^3Y^3= X^2 \cdot S_{\tlex}( \phi_M(f), \phi_M(g))$.

}
\end{example}

\begin{lemma} \label{Spolmon}%
 Let $\R$ be a valuation domain, and ${\ord}$   a  monomial order  on $\Pol$.
Then for any $f, g \in \Pol \setminus \{ 0 \}$ and monomials ${\cal M}_1,{\cal M}_2 \in \Pol$,
there exists a monomial ${\cal N} \in \Pol$ such that
$$S({\cal M}_1\cdot f,{\cal M}_2 \cdot g)= {\cal N} \cdot S(f,g).$$

 \n More precisely, ${\cal N}= \frac{\lcm({\cal M}_1\cdot  \LM(f),\,{\cal M}_2 \cdot \LM(g))}{\lcm(\LM(f),\,\LM(g))}$.
\end{lemma}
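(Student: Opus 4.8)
The plan is to reduce directly to the definition of the $S$-polynomial, using the fact that multiplying the inputs by monomials does not change their leading coefficients and only rescales their leading monomials. Write $a := \LC(f)$, $b := \LC(g)$, and note first that $\LC(\mathcal{M}_1 \cdot f) = a$ and $\LC(\mathcal{M}_2 \cdot g) = b$, since multiplication by a monomial is injective on monomials and preserves the order $\ord$; likewise $\LM(\mathcal{M}_1 \cdot f) = \mathcal{M}_1 \cdot \LM(f)$ and $\LM(\mathcal{M}_2 \cdot g) = \mathcal{M}_2 \cdot \LM(g)$. In particular the divisibility test $a \mid b$ or $b \mid a$ has the same outcome for the pair $(\mathcal{M}_1 \cdot f, \mathcal{M}_2 \cdot g)$ as for $(f,g)$, so it suffices to treat one case; suppose $b \mid a$ (the other case, where $a \mid b$ and $b \nmid a$, is symmetric with the roles of the two leading coefficients exchanged, and the argument is identical).

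Next I would set $\mathcal{M} := \lcm(\LM(f), \LM(g))$ and $\mathcal{M}' := \lcm(\mathcal{M}_1 \cdot \LM(f),\, \mathcal{M}_2 \cdot \LM(g))$, and define $\mathcal{N} := \mathcal{M}' / \mathcal{M}$, which is indeed a monomial in $\Pol$ because $\mathcal{M}$ divides $\mathcal{M}'$ (both $\mathcal{M}_1 \cdot \LM(f)$ and $\mathcal{M}_2 \cdot \LM(g)$ are divisible by the corresponding $\LM$, hence so is their lcm divisible by $\mathcal{M}$). With this notation, by Definition \ref{def222Spaire} we have
\[
S(\mathcal{M}_1 \cdot f,\, \mathcal{M}_2 \cdot g) = \frac{\mathcal{M}'}{\mathcal{M}_1 \cdot \LM(f)}\,(\mathcal{M}_1 \cdot f) - \frac{a}{b}\,\frac{\mathcal{M}'}{\mathcal{M}_2 \cdot \LM(g)}\,(\mathcal{M}_2 \cdot g) = \frac{\mathcal{M}'}{\mathcal{M}}\left( \frac{\mathcal{M}}{\LM(f)}\,f - \frac{a}{b}\,\frac{\mathcal{M}}{\LM(g)}\,g \right) = \mathcal{N} \cdot S(f,g),
\]
where the middle equality uses the cancellations $\mathcal{M}' / (\mathcal{M}_1 \cdot \LM(f)) = (\mathcal{M}'/\mathcal{M}) \cdot (\mathcal{M}/\LM(f))$ and similarly for the second term. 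This is exactly the claimed identity, with $\mathcal{N}$ as stated in the lemma.

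There is really no serious obstacle here; the statement is a bookkeeping identity. The only point that requires a moment's care is checking that $\mathcal{N} = \mathcal{M}'/\mathcal{M}$ is a genuine monomial (i.e.\ that $\mathcal{M} \mid \mathcal{M}'$) and that the divisibility test underlying the definition of $S(\cdot,\cdot)$ is insensitive to the monomial factors $\mathcal{M}_1, \mathcal{M}_2$, so that the two cases in Definition \ref{def222Spaire} line up; both are immediate. One should also note that the lemma is stated for nonzero $f, g$, so all leading data are well defined, and the degenerate conventions $S(0,h) = S(h,0) = 0$ are not needed.
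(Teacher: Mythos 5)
Your proof is correct: the paper actually states Lemma \ref{Spolmon} without any proof (it is one of the ``easy but important lemmas''), and your direct verification from Definition \ref{def222Spaire} --- observing that $\LC(\mathcal{M}_i\cdot h)=\LC(h)$ and $\LM(\mathcal{M}_i\cdot h)=\mathcal{M}_i\cdot\LM(h)$, so the divisibility test is unchanged, then factoring $\lcm(\mathcal{M}_1\LM(f),\mathcal{M}_2\LM(g))$ through $\lcm(\LM(f),\LM(g))$ --- is exactly the intended bookkeeping argument. The one point needing care, that $\lcm(\LM(f),\LM(g))$ divides $\lcm(\mathcal{M}_1\LM(f),\mathcal{M}_2\LM(g))$ so that $\mathcal{N}$ is a genuine monomial, is checked correctly.
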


\begin{proposition} \label{Sinfty}%
  Let $\R$ be a valuation domain,  $\ord$   a rational monomial order  on $\Pol$ given by
 a rank~$n$ matrix $M \in \MatZp$, and \(f_1,\dots,f_p\in \Pol\). Then:

\begin{itemize}
\item [{\rm (1)}]  For any $q \in \mathbb{N} $ and  $g \in S_{\ord}^{q}(f_1,\dots,f_p)$,
there exist a monomial ${\cal N} \in \Pol$ and $h \in S_{\tlex}^{q}( \phi_M(f_1),\dots, \phi_M(f_p))  $  such
 that $ \phi_M(g)= {\cal N} \cdot h$.
\item [{\rm (2)}] For any $q \in \mathbb{N} $ and $h \in S_{\tlex}^{q}( \phi_M(f_1),\dots, \phi_M(f_p))  $,
there exist a monomial ${\cal N} \in \Pol$ and $g \in S_{\ord}^{q}(f_1,\dots,f_p)$  such
 that $ \phi_M(g)= {\cal N} \cdot h$.

\item [{\rm (3)}] The ideal $\gen{f_1,\dots,f_p}$ contains
a $\ord$-monic polynomial if and only if  the ideal $\gen{\phi_M(f_1),\dots, \phi_M(f_p)}$ contains a $\tlex$-monic polynomial.

\end{itemize}

\end{proposition}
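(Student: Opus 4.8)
The plan is to prove (1) and (2) by induction on $q$, using the lemmas already established, and then deduce (3) as an easy consequence of (1) and (2) together with Lemma \ref{LemPhiM} and Theorem \ref{tPrelim}.

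For (1), the base case $q=0$ is trivial: $S_{\ord}^0(f_1,\dots,f_p) = \{f_1,\dots,f_p\}$ and we take ${\cal N}=1$, $h=\phi_M(f_i)$. For the inductive step, an element $g \in S_{\ord}^{q+1}(f_1,\dots,f_p)$ is either already in $S_{\ord}^{q}(f_1,\dots,f_p)$ (apply the induction hypothesis directly) or of the form $g = S_{\ord}(g_1,g_2)$ for some $g_1,g_2 \in S_{\ord}^{q}(f_1,\dots,f_p)$. In the latter case, apply the induction hypothesis to get monomials ${\cal N}_1,{\cal N}_2$ and $h_1,h_2 \in S_{\tlex}^{q}(\phi_M(f_1),\dots,\phi_M(f_p))$ with $\phi_M(g_i) = {\cal N}_i \cdot h_i$. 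Now combine Lemma \ref{Spol} (applied to $g_1,g_2$, which gives $\phi_M(S_{\ord}(g_1,g_2)) = {\cal N}' \cdot S_{\tlex}(\phi_M(g_1),\phi_M(g_2))$ for some monomial ${\cal N}'$) with Lemma \ref{Spolmon} (applied to rewrite $S_{\tlex}(\phi_M(g_1),\phi_M(g_2)) = S_{\tlex}({\cal N}_1 \cdot h_1, {\cal N}_2 \cdot h_2) = {\cal N}'' \cdot S_{\tlex}(h_1,h_2)$ for some monomial ${\cal N}''$). Since $S_{\tlex}(h_1,h_2) \in S_{\tlex}^{q+1}(\phi_M(f_1),\dots,\phi_M(f_p))$, setting ${\cal N} := {\cal N}' {\cal N}''$ and $h := S_{\tlex}(h_1,h_2)$ completes the step. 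Here one must also check the degenerate cases where some $S$-polynomial vanishes, but then the corresponding equality holds trivially with $h=0$.

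For (2), the argument is symmetric. Using Lemma \ref{LemPhiL}, the homomorphism $\phi_M : \A \to \B$ has the ``partial inverse'' $\phi_L : \B \to \A$ with $\phi_L \circ \phi_M = \phi_{k\cdot\Id_n}$, which is multiplication-friendly on monomials. Given $h \in S_{\tlex}^{q}(\phi_M(f_1),\dots,\phi_M(f_p))$, I would first run the analogue of (1) with the roles of $\ord$ and $\tlex$ swapped and $M$ replaced by $L$: this produces a monomial ${\cal N}'$ and $g' \in S_{\ord}^{q}(\phi_L\phi_M(f_1),\dots,\phi_L\phi_M(f_p)) = S_{\ord}^{q}(\phi_{k\cdot\Id_n}(f_1),\dots,\phi_{k\cdot\Id_n}(f_p))$ with $\phi_L(h) = {\cal N}' \cdot g'$. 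Since $\phi_{k\cdot\Id_n}$ is itself induced by the scalar matrix, $\phi_{k\cdot\Id_n}(f_i) = \psi(f_i)$ where $\psi$ is the map $X_i\mapsto X_i^k$; one checks that $S_{\ord}^{q}(\psi(f_1),\dots,\psi(f_p)) = \psi(S_{\ord}^q(f_1,\dots,f_p))$ up to monomial multiples (again via Lemmas \ref{Spol}--\ref{Spolmon}, as $\psi$ is also a $\phi_M$-type map with an invertible-over-$\QQ$ matrix), so $g' = {\cal N}'' \cdot \psi(g)$ for some $g \in S_{\ord}^q(f_1,\dots,f_p)$ and monomial ${\cal N}''$. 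Applying $\phi_M$ to $\phi_L(h) = {\cal N}'{\cal N}''\psi(g)$ and using $\phi_M\phi_L = \phi_{k\cdot\Id_n} = \phi_M \circ \psi$ on the relevant ring gives, after cancelling, an identity $\phi_M(g) \cdot (\text{monomial}) = h \cdot (\text{monomial})$; since we are in a domain and everything is a monomial times $h$ or $\phi_M(g)$, this yields the desired ${\cal N}$ with $\phi_M(g) = {\cal N}\cdot h$ (or, if a divisibility goes the wrong way, one absorbs the extra monomial — the cleanest route is to observe directly that $\phi_M$ and $\phi_L$ are injective and track leading monomials). The main obstacle is exactly this bookkeeping: ensuring that after passing back and forth through $\phi_M$, $\phi_L$ and $\psi$, the monomial factors land on the correct side of the equation so that the conclusion has the stated form $\phi_M(g) = {\cal N}\cdot h$ rather than $h = {\cal N}\cdot\phi_M(g)$.

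Finally, for (3): if $\gen{f_1,\dots,f_p}$ contains a $\ord$-monic polynomial $h$, then by Theorem \ref{tPrelim} there is $q$ and $g_1,\dots,g_t \in S_{\ord}^q(f_1,\dots,f_p)$ with $\LT_{\ord}(h) \in \gen{\LT_{\ord}(g_1),\dots,\LT_{\ord}(g_t)}$; since $\LT_{\ord}(h)$ is a monomial, it must already be a monomial multiple of some $\LT_{\ord}(g_i)$, and as $\R$ is a domain this forces $\LC_{\ord}(g_i)$ to be a unit, i.e.\ after scaling we may take some $g \in S_{\ord}^q(f_1,\dots,f_p)$ that is $\ord$-monic. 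Applying (1) and Lemma \ref{LemPhiM}, $\phi_M(g) = {\cal N}\cdot h'$ with $h' \in S_{\tlex}^q(\phi_M(f_1),\dots,\phi_M(f_p))$ and $\LT_{\tlex}(\phi_M(g)) = \phi_M(\LT_{\ord}(g)) = \phi_M(\LM_{\ord}(g))$, a monomial; hence $\LT_{\tlex}(h')$ is a monomial divisor of a monomial, so $h'$ (up to a unit) is $\tlex$-monic, and it lies in $\gen{\phi_M(f_1),\dots,\phi_M(f_p)}$. The converse is identical using (2) in place of (1). $\Box$
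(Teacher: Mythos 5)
Your part (1) is exactly the paper's argument, and in part (3) your converse direction is the paper's (Theorem \ref{tPrelim} plus assertion (2), using that over a valuation domain a monomial lying in $\gen{\LT(g_1),\dots,\LT(g_t)}$ forces some $\LC(g_i)$ to be a unit); for the forward direction of (3) you work much harder than necessary: the paper simply applies $\phi_M$ to the $\ord$-monic polynomial and invokes Lemma \ref{LemPhiM}, which needs no $S$-polynomial machinery and works over any ring. Your forward argument is correct but not needed.

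The genuine gap is in part (2). The intended proof is literally the mirror image of (1), run in the same direction of $\phi_M$: if $h=S_{\tlex}(h_1,h_2)$ with $h_1,h_2\in S_{\tlex}^{q}(\phi_M(f_1),\dots,\phi_M(f_p))$, the induction hypothesis gives $g_1,g_2\in S_{\ord}^{q}(f_1,\dots,f_p)$ and monomials ${\cal N}_1,{\cal N}_2$ with $\phi_M(g_i)={\cal N}_i\, h_i$; one then sets $g:=S_{\ord}(g_1,g_2)\in S_{\ord}^{q+1}(f_1,\dots,f_p)$ and computes, via Lemmas \ref{Spol} and \ref{Spolmon}, $\phi_M(g)={\cal N}_3\, S_{\tlex}(\phi_M(g_1),\phi_M(g_2))={\cal N}_3{\cal N}_4\, S_{\tlex}(h_1,h_2)={\cal N}_3{\cal N}_4\, h$. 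Note that the conclusion of (2) still has $\phi_M$ applied to $g$, so $\phi_M$ never needs to be inverted. Your detour through $\phi_L$ and $\psi=\phi_{k\cdot\Id_n}$ is both unavailable with the tools of the paper and inconclusive: $L=kM^{-1}$ lies in $\MatZ$ but in general not in $\MatZp$, so $\phi_L$ maps $\Pol$ into $\LPol$, and the ``analogue of (1) with $M$ replaced by $L$'' would require a version of Lemma \ref{Spol} for $S$-polynomials of Laurent polynomials, which is not established. Worse, even granting all of that, tracking exponents in $\phi_M\phi_L(h)=\psi(h)$ against $\psi(\phi_M(g))$ yields a relation of the shape $h={\cal N}'\cdot\phi_M(g)$ with ${\cal N}'$ a Laurent monomial, i.e.\ the monomial factor on the wrong side of the equation; you flag exactly this bookkeeping problem yourself but give no argument that converts it into the required $\phi_M(g)={\cal N}\cdot h$ with ${\cal N}$ an honest monomial of $\Pol$. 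So as written, (2) is not proved; the fix is simply to run the same induction as in (1) with the roles of the two $S$-sets exchanged.
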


\begin{proof}{Proof.} Let us prove (1), the proof of (2) being similar. We induct on $q$. For $q=0$, this is clear as we can take $ {\cal N}=1$.
Suppose that the result is true for $q$ and let us prove it for $q+1$. Let $g \in S_{\ord}^{q+1}(f_1,\dots,f_p)$.
If $g \in S_{\ord}^{q}(f_1,\dots,f_p)$ then we are done by the induction hypothesis since
$ S_{\tlex}^{q}( \phi_M(f_1),\dots, \phi_M(f_p)) \subseteq S_{\tlex}^{q+1}( \phi_M(f_1),\dots, \phi_M(f_p))$.
Else, $g=S_{\ord}(g_1,g_2)$ with $g_1,g_2 \in S_{\ord}^{q}(f_1,\dots,f_p)$.
By the induction hypothesis, there exist  monomials ${\cal N}_1,{\cal N}_2 \in \Pol$ and $h_1,h_2 \in S_{\tlex}^{q}( \phi_M(f_1),\dots, \phi_M(f_p))$
such  that $\phi_M(g_1)= {\cal N}_1 \cdot h_1$ and $ \phi_M(g_2) = {\cal N}_2 \cdot h_2$.
By Lemma \ref{Spol}, we have $ \phi_M(g)= \phi_M(S_{\ord}(g_1,g_2))= {\cal N}_3 \cdot S_{\tlex}( \phi_M(g_1), \phi_M(g_2))$ for some monomial ${\cal N}_3 \in \Pol$.
By Lemma \ref{Spolmon}, we obtain that
$\phi_M(g)= {\cal N}_3 \cdot S_{\tlex}( {\cal N}_1 \cdot h_1, {\cal N}_2 \cdot h_2) = {\cal N}_3 \cdot{\cal N}_4 \cdot S_{\tlex}( h_1, h_2)$
for some  monomial ${\cal N}_4 \in \Pol$. The desired result follows since
$S_{\tlex}( h_1, h_2) \in S_{\tlex}^{q+1}( \phi_M(f_1),\dots, \phi_M(f_p))$.

 \smallskip

 \n (3) If the ideal $\gen{f_1,\dots,f_p}$ contains a $\ord$-monic polynomial $P$, then
$\phi_M(P) \in \gen{\phi_M(f_1),\dots, \phi_M(f_p)}$ is a $\tlex$-monic polynomial by Lemma \ref{LemPhiM} (this holds for any ring $\R$).
If the ideal $\gen{\phi_M(f_1),\dots, \phi_M(f_p)}$ contains a $\ord$-monic polynomial, then $\gen{f_1,\dots,f_p}$ contains a
$\tlex$-monic polynomial by Theorem \ref{tPrelim} and assertion (2) above.
\end{proof}

\begin{corollary} \label{CorSInfty}%
  Let $\R$ be a Pr\"ufer domain and let $\ord$ be a rational monomial order  on $\Pol$ given by
 a rank~$n$ matrix $M \in \MatZp$. Let \(f_1,\dots,f_p\in \Pol\). The ideal $\gen{f_1,\dots,f_p}$ contains a $\ord$-monic polynomial
if and only if  the ideal $\gen{\phi_M(f_1),\dots, \phi_M(f_p)}$ contains a $\tlex$-monic polynomial.
\end{corollary}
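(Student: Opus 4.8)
The plan is to reduce the Pr\"ufer case to the valuation-domain case already settled in Proposition \ref{Sinfty}(3), using the standard local-global machinery for Pr\"ufer domains. First I would observe that both sides of the claimed equivalence are stable under the ring homomorphism $\phi_M$ in the trivial direction (if $\gen{f_1,\dots,f_p}$ contains a $\ord$-monic $P$, then $\phi_M(P)$ is $\tlex$-monic by Lemma \ref{LemPhiM}, which holds over any ring), so the content is entirely in the converse implication: if $\gen{\phi_M(f_1),\dots,\phi_M(f_p)}$ contains a $\tlex$-monic polynomial, then $\gen{f_1,\dots,f_p}$ contains a $\ord$-monic polynomial. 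The key point is that ``an ideal generated by given polynomials contains a $\ord$-monic polynomial'' is a property that can be checked locally: more precisely, I would invoke a concrete local-global principle of the form ``if $s_1,\dots,s_\ell$ are comaximal elements of $\R$ and $\gen{f_1,\dots,f_p}\R_{s_i}[X_1,\dots,X_n]$ contains a $\ord$-monic polynomial for each $i$, then so does $\gen{f_1,\dots,f_p}$ itself''. This is the constructive substitute for ``check at every prime''; its proof is a routine patching of the monic certificates, clearing denominators and using the comaximality relation $\sum u_i s_i^{N}=1$.

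Granting that principle, the argument runs as follows. Since $\R$ is a Pr\"ufer (hence arithmetical) domain, for the finitely many pairs of leading coefficients that arise when running the $S_{\ord}$-process on $f_1,\dots,f_p$, property \eqref{eqari} produces finitely many elements $t$ (equivalently, a finite comaximal system $s_1,\dots,s_\ell$) such that each localization $\R_{s_i}$ is a valuation domain on which all the relevant divisibility tests become decidable. Equivalently — and this is the cleanest route — one uses that over a Pr\"ufer domain the conclusion of Theorem \ref{tPrelim} (namely $\LT(\gen{f_1,\dots,f_p}) = \gen{\LT(S^\infty(f_1,\dots,f_p))}$, and likewise the characterization of when the ideal contains a $\ord$-monic polynomial in terms of the iterated $S$-sets) holds after localizing at each $s_i$, because $\R_{s_i}$ is a valuation domain. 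Then Proposition \ref{Sinfty}(3) applies verbatim over each $\R_{s_i}$: the ideal $\gen{\phi_M(f_1),\dots,\phi_M(f_p)}$ contains a $\tlex$-monic polynomial over $\R_{s_i}$ (it does over $\R$, a fortiori over the localization), hence $\gen{f_1,\dots,f_p}$ contains a $\ord$-monic polynomial over $\R_{s_i}$, for every $i$. Finally the local-global principle glues these local $\ord$-monic polynomials into a single $\ord$-monic polynomial in $\gen{f_1,\dots,f_p}$ over $\R$.

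The main obstacle I anticipate is formulating and justifying the gluing step constructively, i.e.\ making precise the statement ``containing a $\ord$-monic polynomial is a local property'' and checking that the patched polynomial is genuinely $\ord$-monic (leading coefficient exactly $1$, not merely a unit) and genuinely in the ideal. Concretely: from $\ord$-monic $P_i \in \gen{f_1,\dots,f_p}\R_{s_i}[\underline X]$ one clears denominators to get $Q_i \in \gen{f_1,\dots,f_p}$ with $\LC_{\ord}(Q_i)=s_i^{m_i}$ for suitable $m_i$; raising to a common power and using comaximality of the $s_i^{m}$ one writes $1=\sum u_i s_i^{m}$ and forms $\sum u_i' Q_i$ with the $u_i'$ chosen (by a degree/monomial bookkeeping argument on the $X$-side) so that the leading terms add up to $1\cdot X^{\gamma}$ for a common multidegree $\gamma$; one must arrange, possibly by multiplying each $Q_i$ by an appropriate monomial first, that all the $Q_i$ share the same leading monomial before combining. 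This bookkeeping is routine but is where the care lies; everything else is a direct appeal to Proposition \ref{Sinfty}(3) and to the fact that localizations of Pr\"ufer domains are valuation domains.
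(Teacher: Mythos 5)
Your proposal takes essentially the same route as the paper: its proof is precisely the observation that containing a monic (here $\ord$-monic) polynomial is a local-global property, that a local Pr\"ufer domain is a valuation domain, and that the valuation case is Proposition \ref{Sinfty}. Your extra paragraph on clearing denominators and patching the local $\ord$-monic certificates merely spells out what the paper delegates to the cited local-global principle, so there is no substantive difference.
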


\begin{proof}{Proof.} The property  that a given ideal $I$ of $\Pol$ contains a monic polynomial is a local-global, that is,  it suffices
to check it locally. A local Pr\"ufer domain is a valuation domain. The desired result follows from Proposition \ref{Sinfty}(2).
\end{proof}

\begin{proposition} \label{monic}
  Let $\R$ be a valuation domain of Krull dimension at most $1$, let ${\ord}$ be a rational monomial order on $\Pol$, and $I$ a nonzero finitely
 generated ideal of   $\Pol$. Then there exists an element
$a \in \R$  such that $\LT_{\ord}(I)= a \cdot J$, where $J$ is an ideal of $\Pol$  which contains a monomial. In particular, $\LC_{\ord}(I)=\gen{a}$ is principal.
\end{proposition}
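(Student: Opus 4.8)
The plan is to reduce the rational monomial order $\ord$ to the lexicographic case $\tlex$ via the change of variables $\phi_M$, where $M \in \MatZp$ is a rank-$n$ matrix defining $\ord$. First I would invoke Lemma~\ref{LemPhiL} to get injective ring homomorphisms $\phi_M : \A \to \B$ and $\phi_L : \B \to \A$ between the Serre conjecture rings $\A = \SCR$ and $\B = \SCRl$, with $\phi_M \circ \phi_L = \phi_L \circ \phi_M = \phi_{k\cdot\Id_n}$. The point of passing to $\tlex$ is that the lexicographic case is already understood: for a valuation domain $\R$ of Krull dimension $\leq 1$, one knows (by the results underlying \cite[Theorem~4]{Y4}, i.e.\ that $\SRX$ is B\'ezout of Krull dimension $\leq 1$, applied inductively) that $\LT_{\tlex}$ of a finitely generated ideal is well-controlled — concretely, that $\LC_{\tlex}(I')$ is principal for any finitely generated ideal $I'$ of $\Pol$. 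So the first substantive step is: \emph{for the lexicographic order, prove the statement directly} — namely, given a nonzero finitely generated ideal $I' = \gen{f_1,\dots,f_p}$, after dividing through by a common monomial factor in each $f_i$ we may assume $\gen{\LT_{\tlex}(S_{\tlex}^\infty(f_1,\dots,f_p))}$ (which equals $\LT_{\tlex}(I')$ by Theorem~\ref{trailing}) and then extract the element $a$ as a gcd of the finitely many leading coefficients appearing, using that $\R$ is a valuation domain so divisibility is total and one-dimensionality to control the $S$-polynomial process so that only finitely many leading coefficients (up to associates) occur.

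Next I would transport this back. Let $I = \gen{f_1,\dots,f_p}$ in $\Pol$ with $\ord$; apply $\phi_M$ to get $I^\flat := \gen{\phi_M(f_1),\dots,\phi_M(f_p)}$. By Lemma~\ref{LemPhiM}, $\phi_M(\LT_{\ord}(f)) = \LT_{\tlex}(\phi_M(f))$ for every nonzero $f$, and by Proposition~\ref{Sinfty}(1)--(2) the images $\phi_M(S_{\ord}^q(\underline f))$ and $S_{\tlex}^q(\phi_M(\underline f))$ agree up to monomial multiples at every level $q$. Combining these with Theorem~\ref{trailing} applied on both sides, the ideal $\LT_{\tlex}(I^\flat)$ of $\Pol$ is, up to monomial factors, the $\phi_M$-image of $\LT_{\ord}(I)$; more precisely the set of leading \emph{coefficients} that occur is literally the same set (monomial multiples do not change $\LC$). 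Hence $\LC_{\ord}(I)$ and $\LC_{\tlex}(I^\flat)$ are generated by the same finite set of elements of $\R$; since $\R$ is a valuation domain, a finitely generated ideal of $\R$ is principal, so there is $a \in \R$ with $\LC_{\ord}(I) = \LC_{\tlex}(I^\flat) = \gen{a}$. That already gives the "in particular" clause for free, and it gives the $a$ of the main statement.

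It remains to produce the ideal $J$ with $\LT_{\ord}(I) = a\cdot J$ and $J$ containing a monomial. Here I would argue as follows. By Theorem~\ref{trailing}, $\LT_{\ord}(I)$ is generated by $\{\LT_{\ord}(g) : g \in S_{\ord}^\infty(f_1,\dots,f_p)\}$, and each such $\LT_{\ord}(g) = \LC_{\ord}(g)\,\LM_{\ord}(g)$ with $\LC_{\ord}(g) \in \gen{a}$, say $\LC_{\ord}(g) = a\cdot c_g$. Set $J := \gen{c_g\,\LM_{\ord}(g) : g \in S_{\ord}^\infty(f_1,\dots,f_p)}$; then $\LT_{\ord}(I) = a\cdot J$ tautologically (one checks both inclusions, using that $\R$ is a domain so multiplication by $a$ is injective on coefficients). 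To see that $J$ contains a monomial, I would use that $a$ is a gcd of the $\LC_{\ord}$'s, so \emph{some} $g$ has $\LC_{\ord}(g)$ equal to $a$ up to a unit (valuation domain: in any finite list there is a minimal element for divisibility, and one-dimensionality ensures the $S$-process stabilizes so that this minimum is attained within some $S_{\ord}^q$); for that $g$, $c_g$ is a unit, so $c_g\,\LM_{\ord}(g) \in J$ is (a unit times) a monomial, hence $J$ contains a monomial.

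The main obstacle I anticipate is the finiteness/stabilization argument: showing that only finitely many leading coefficients (up to associates) arise in $S_{\ord}^\infty$ and that the divisibility-minimal one is actually realized. This is exactly where Krull dimension $\leq 1$ of the valuation domain $\R$ must be used — in a higher-dimensional valuation domain the chain of leading coefficients produced by iterated $S$-polynomials need not stabilize, and indeed the conclusion fails. I would handle it by transporting to the lexicographic order (where this finiteness is the content of the known proof of the Gröbner Ring Conjecture in the $\tlex$ case) rather than re-proving it from scratch for general rational $\ord$; concretely, Corollary~\ref{CorSInfty} together with Proposition~\ref{Sinfty} lets me pull the principality of $\LC_{\tlex}(I^\flat)$ back to $\LC_{\ord}(I)$, and the realization of the minimum follows because $\gen{a}$ being finitely generated and $\R$ being a gcd domain means $a$ is an honest gcd, attained as $\LC$ of an explicit $\R$-linear combination of the generators of $\LT_{\ord}(I)$, which then lies in some finite stage $S_{\ord}^q$.
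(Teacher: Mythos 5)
Your proposal is correct and follows essentially the same route as the paper's proof: transport $I$ to the lexicographic setting via $\phi_M$, invoke the known one-dimensional lexicographic result underlying \cite[Theorem~4]{Y4} to obtain principality (via the decomposition) there, and pull the conclusion back through Proposition~\ref{Sinfty} together with Theorem~\ref{tPrelim}. Your explicit construction of $J$ from $S_{\ord}^{\infty}(f_1,\dots,f_p)$ and the total-divisibility argument showing that $a$ is realized (up to a unit) as an actual leading coefficient merely spell out what the paper compresses into the factorization $\gen{\phi_M(f_1),\dots,\phi_M(f_p)} = b \cdot \Delta \cdot \gen{g_1,\ldots,g_s}$ with $a = b \cdot \LCl(\Delta)$.
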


\begin{proof}{Proof.} Write $I= \gen{f_1,\dots,f_p}$ where $f_i \in \Pol \setminus \{ 0 \}$, and suppose that
${\ord}$  is given by
 a rank~$n$ matrix $M = (\alpha_{i,j}) \in \mathbb{Z}_{\ge 0}^{n \times n}$. As in the proof of  \cite[Theorem~4]{Y4},
one can write $\gen{\phi_M(f_1),\dots, \phi_M(f_p)} = b \cdot \Delta \cdot  \gen{g_1,\ldots,g_s}$, where $b \in \R$, $\Delta, \,g_j \in \Pol \setminus \{ 0 \}$ and
$\gen{g_1,\ldots,g_s}$ contains a polynomial $g$ whose leading coefficient is $1$ with respect to the monomial order $\tlex$.
The desired result follows since $\LTl(\gen{\phi_M(f_1),\dots, \phi_M(f_p)}) = b \cdot \LTl(\Delta) \cdot  \LTl( \gen{g_1,\ldots,g_s})$,
and by virtue of Proposition \ref{Sinfty} ($a= b \cdot \LCl(\Delta)$).
\end{proof}

While the counterexample given in \cite{Y-JA21} dispels the Gr\"obner ring conjecture  for the irrational case, Proposition \ref{monic}
above bolsters the evidence for this conjecture in the rational case since in order to show that for a  nonzero finitely
 generated ideal $I$  of   $\Pol$ ($\R$  being a one-dimensional valuation domain) and a rational monomial
 order $\ord$ on $\Pol$, the ideal $\LT_{\ord}(I)$  is finitely generated, one can suppose w.l.o.g that $I$ contains a
$\ord$-monic polynomial exactly as in the lexicographic case \cite[Proof of Theorem 4]{Y4}.


\section{The ring $\R_{ {\ord}}\gen{X_1,\ldots,X_n}$ }\label{s333}

 The proof of the  Gr\"obner Ring Conjecture in the  univariate case \cite{LSY} relies heavily on Brewer-Costa theorem \cite[Theorem 1]{BC}:
\begin{theorem}{\em (Brewer-Costa)} \label{ThBC}
 Let $\R$ be a ring.
\begin{itemize}
\item [{\rm (1)}]   $\R\gen{X}$ is a B\'ezout domain  if and only if ~$\R$ is a
B\'ezout domain of Krull dimension $\leq 1$. In this case, $\R\gen{X}$ has Krull dimension $\leq 1$.

\item [{\rm (2)}]  $\R\gen{X}$ is a Pr\"ufer domain  if and only if ~$\R$ is a
Pr\"ufer domain of Krull dimension $\leq 1$. In this case, $\R\gen{X}$ has Krull dimension $\leq 1$.
\end{itemize}
\end{theorem}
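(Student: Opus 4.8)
The plan is to recover the multivariate statement from the univariate one by a clean induction once the base case $n=1$ is in place, so the real content is the equivalence ``$\R\gen{X}$ is B\'ezout (resp.\ Pr\"ufer) of Krull dimension $\le 1$ $\iff$ $\R$ is B\'ezout (resp.\ Pr\"ufer) of Krull dimension $\le 1$''. For the forward implication I would note that $\R$ embeds in $\R\gen{X}$ and that the property of being a Pr\"ufer (resp.\ B\'ezout) domain, as well as the bound on the Krull dimension, can be pulled back along this inclusion: indeed $\R = \R\gen{X} \cap \mathbf{K}(X)$-type arguments, or more constructively, for $a,b\in\R$ one applies coherence/arithmeticity of $\R\gen{X}$ to the ideal $\gen{a,b}$ and then checks that the witnesses can be taken in $\R$ itself by looking at the leading coefficient in $X$ (a monic polynomial dividing a constant multiple of $a$ forces a relation over $\R$). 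The constraint $\dim \R \le 1$ comes out because $\dim \R\gen{X}\ge \dim \R$ whenever $\R$ is an arithmetical domain and $\SRX$ is nontrivial.

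The substantive direction is the converse: assuming $\R$ is a B\'ezout (resp.\ Pr\"ufer) domain with $\dim \R \le 1$, show $\R\gen{X}$ is B\'ezout (resp.\ Pr\"ufer) and $\dim \R\gen{X}\le 1$. First I would reduce the B\'ezout case to the Pr\"ufer case: a domain is B\'ezout iff it is Pr\"ufer and a gcd domain (stated in the excerpt), and $\R\gen{X}$ inherits the gcd property from $\R$ because in the localization at $\R$-monic polynomials one may normalize any finitely generated ideal to be generated by polynomials whose leading coefficients form a principal (hence trivializable) ideal, after which Gauss's lemma over the gcd domain $\R$ produces a gcd. So it suffices to prove $\R\gen{X}$ is Pr\"ufer of dimension $\le 1$ when $\R$ is. For the arithmetical property one works locally: by the local-global principle it is enough to treat $\R$ a valuation domain of Krull dimension $\le 1$, and then $\R\gen{X} = S^{-1}\R[X]$ with $S$ the $\R$-monic polynomials; given two elements, clear denominators to get $f,g\in\R[X]$, and run a division/$S$-polynomial style argument (the $S_{\tlex}$-machinery of Section~\ref{s1} specialized to $n=1$) to show $\gen{f,g}$ becomes principal after inverting a monic polynomial — here one crucially uses that over a one-dimensional valuation domain a suitable leading-coefficient manipulation together with $\dim\R\le1$ lets one absorb the ``content'' and produce a monic generator. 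For the dimension bound $\dim\R\gen{X}\le 1$, I would show that every prime of $\R\gen{X}$ lies over $(0)$ or over the maximal ideal of the valuation domain $\R$, and that chains of length $2$ are impossible because inverting monic polynomials collapses the ``polynomial direction'': a prime not containing all of $S$ meeting $\R$ in $(0)$ must be $(0)$ since $\mathbf{K}\otimes_\R \R\gen{X}$ is a localization of $\mathbf{K}[X]$ at monic polynomials, i.e.\ a field (the residue; in general a PID with those primes inverted). One then uses the Krull-dimension formula / the constructive characterization via the Zariski lattice.

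The main obstacle I expect is the converse Pr\"ufer direction over a one-dimensional valuation domain: making explicit, in the constructive Bishop-style setting, the algorithm that takes $f,g\in\R[X]$ and outputs a monic polynomial $m\in\gen{f,g}$-after-scaling so that $\gen{f,g}$ becomes principal in $S^{-1}\R[X]$. This is exactly the delicate point where $\dim\R\le 1$ is indispensable — it is false for valuation domains of higher dimension — and where one must combine the valuation divisibility test on coefficients with the fact that, in dimension one, a polynomial whose coefficients generate the unit ideal locally can be driven to a monic one. Since the excerpt explicitly flags this as the historical cornerstone of both Brewer--Costa and the lexicographic Gr\"obner ring conjecture, I would cite \cite{BC} and \cite{LSY} for the univariate case rather than re-derive it, and then assemble the induction and the dimension bookkeeping as above; the dimension-$\le 1$ claim in the ``In this case'' clauses follows formally once the Pr\"ufer statement is known, because a Pr\"ufer domain that is a localization with the prime structure just described has Krull dimension bounded by that of $\R$.
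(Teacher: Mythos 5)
Your handling of the converse direction (cite the univariate result, reduce B\'ezout to Pr\"ufer plus gcd, localize to valuation domains) is in the same spirit as the paper, which simply refers to \cite{LQY} for that part. The genuine gap is in the direction you call the forward implication: from the \emph{sole} hypothesis that $\R\gen{X}$ is a B\'ezout domain you must derive $\dim\R\le 1$, and your argument for this --- ``$\dim\R\gen{X}\ge\dim\R$'' --- is circular, since it presupposes $\dim\R\gen{X}\le 1$, which is not a hypothesis but part of the conclusion (the ``In this case'' clause). Being B\'ezout imposes no bound at all on Krull dimension (valuation domains of arbitrary dimension are B\'ezout), so a specific mechanism is needed here, and your proposal contains none.

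The paper supplies exactly this missing step. For $a,b\in\R\setminus\{0\}$ one has $\gcd(1+aX,b)=1$, so if $\R\gen{X}$ is B\'ezout the ideal $\gen{1+aX,b}$ of $\R[X]$ contains a monic polynomial; by Lemma \ref{imonic-dim1}, $\LT(\gen{1+aX,b})=[b:a^{\infty}][X]+\gen{aX}$, whence $1\in[b:a^{\infty}]+\gen{a}$, i.e. $(1-\alpha a)a^{n}\in\gen{b}$ for some $\alpha\in\R$ and $n\in\NN$ --- precisely the constructive characterization of Krull dimension $\le 1$ for the domain $\R$. Once this is in hand, $\R$ is a gcd domain by Lemma \ref{LemGCD} (your leading-coefficient pull-back is the right idea for that part, but note the paper pulls back the gcd property, not the B\'ezout property directly), and a gcd domain of Krull dimension $\le 1$ is B\'ezout by \cite[Theorem~XI.3.12]{LQ}; the Pr\"ufer statement then follows by the local-global reduction to valuation domains, as you indicate. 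Finally, the statement here is purely univariate: the multivariate induction you open with belongs to Proposition \ref{PropLex}, not to this theorem, and the general rational-order case (Theorem \ref{ThBCRational}) is not obtained by induction but by the $\phi_M$/$\phi_L$ transfer.
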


Also, the cornerstone of
the proof of the  Gr\"obner Ring Conjecture in the lexicographic order case \cite{Y4} is
the fact that the ring $\SCRl$ (see Notation \ref{serr})
is a B\'ezout domain of Krull dimension $\leq 1$  if so is  $\R$.
This is a consequence of the univariate case since the ring $\SCRl$
(often denoted by $\R\gen{X_1,\ldots,X_n}$ or simply $\R\gen{n}$  in the literature)
can also be defined  by induction as
$$\SCRl = (\R_{\tlex}\gen{X_1,\ldots,X_{n-1}})\gen{X_n}.$$

Thus, we have established, as an immediate consequence of Theorem \ref{ThBC}:

\begin{proposition} \label{PropLex}
Let $\R$ be a ring.
Then the following are equivalent:
\begin{itemize}
\item[$(1)$] $\R$ is a B\'ezout (resp. Pr\"ufer) domain of Krull dimension at most $1$.
\item[$(2)$] $\SCRl$ is a B\'ezout (resp. Pr\"ufer) domain.
\end{itemize}
Furthermore, if any of the above assertions holds, then the Krull dimension of $\SCRl$ is at most $1$.
\end{proposition}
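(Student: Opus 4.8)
The plan is to prove both equivalences simultaneously by induction on $n$, using nothing more than the one-variable Brewer--Costa theorem (Theorem~\ref{ThBC}) together with the inductive description $\SCRl = (\mathbf{R}_{\tlex}\langle X_1,\ldots,X_{n-1}\rangle)\langle X_n\rangle$ recalled just above the statement.

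For the base case $n=1$ one has $\SCRl = \R\langle X\rangle$, and the equivalence of $(1)$ and $(2)$, as well as the bound $\dim \SCRl \le 1$ under either hypothesis, is exactly Theorem~\ref{ThBC}(1) in the B\'ezout case and Theorem~\ref{ThBC}(2) in the Pr\"ufer case. For the inductive step, write $\mathbf{S} := \mathbf{R}_{\tlex}\langle X_1,\ldots,X_{n-1}\rangle$, so that $\SCRl = \mathbf{S}\langle X_n\rangle$. By Theorem~\ref{ThBC}, $\mathbf{S}\langle X_n\rangle$ is B\'ezout (resp.\ Pr\"ufer) if and only if $\mathbf{S}$ is B\'ezout (resp.\ Pr\"ufer) of Krull dimension $\le 1$, and in that case $\dim \mathbf{S}\langle X_n\rangle \le 1$. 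Applying the induction hypothesis to the ring $\R$ (in $n-1$ variables), the condition that $\mathbf{S}$ be B\'ezout (resp.\ Pr\"ufer) of Krull dimension $\le 1$ is equivalent to the condition that $\R$ itself be B\'ezout (resp.\ Pr\"ufer) of Krull dimension $\le 1$. Composing these two equivalences yields $(1)\Leftrightarrow(2)$, and the assertion $\dim \SCRl \le 1$ under $(1)$ follows from the Krull dimension bound of Theorem~\ref{ThBC} applied at the last step of the induction.

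I do not expect any genuine obstacle: once Theorem~\ref{ThBC} is available, the argument is a routine two-step induction. The only point that deserves a word of care is the inductive identity $\SCRl = (\mathbf{R}_{\tlex}\langle X_1,\ldots,X_{n-1}\rangle)\langle X_n\rangle$ itself, which is essentially definitional: for the lexicographic order with $X_1 < \cdots < X_n$, a polynomial $f \in \Pol$ is $\tlex$-monic precisely when, viewed as an element of $\mathbf{R}[X_1,\ldots,X_{n-1}][X_n]$, its leading coefficient in $X_n$ is $\tlex$-monic in $\mathbf{R}[X_1,\ldots,X_{n-1}]$. Hence localizing $\Pol$ at all $\tlex$-monic polynomials coincides with first localizing $\mathbf{R}[X_1,\ldots,X_{n-1}]$ at its $\tlex$-monic polynomials and then localizing at the $X_n$-monic polynomials over the resulting ring. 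This compatibility is exactly what makes the univariate Brewer--Costa theorem propagate through the induction.
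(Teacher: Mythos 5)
Your proof is correct and follows essentially the same route as the paper, which deduces Proposition \ref{PropLex} as an immediate consequence of Theorem \ref{ThBC} via the inductive identity $\SCRl = (\R_{\tlex}\gen{X_1,\ldots,X_{n-1}})\gen{X_n}$. Your added remark justifying that identity (a polynomial is $\tlex$-monic iff its leading coefficient as a polynomial in $X_n$ is $\tlex$-monic in the remaining variables) is the only detail the paper leaves implicit, and you have it right.
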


Let us reassure the reader by recalling that non-Noetherian B\'ezout domains of Krull dimension $1$ do exist. 
The domain of all algebraic integers is one example \cite[Theorem 102]{Kap}. 
Other examples can be built by means of the Krull-Jaffard-Ohm Theorem \cite[Theorem III.5.3]{FS}.

Our goal in this section is to generalize, in a constructive way,
Theorem \ref{ThBC} to $\R_{{\ord}}\gen{X_1,\ldots,X_n}$ where ${\ord}$ is an arbitrary rational monomial order, which is done with Theorem \ref{ThBCRational} below.
We shall first give a complete constructive proof of Theorem \ref{ThBC}. (Note that the constructive proof given in \cite{LQY} deals only with the if part.)

\begin{notation} \label{cond} {\rm
\n If $I$ is an ideal of a ring $\R$ and $b \in \R$, we define

$$[I:a^{\infty}]:=\{x \in \R  \mid  \exists \, n \in \mathbb{N} \mid
x\, a^n \in I \}.$$

\n For $a,\,b \in \R$, we define

\vspace{-1em}

$$[b:a^{\infty}]:= [\gen{b}:a^{\infty}].$$}
\end{notation}

\begin{lemma} \label{imonic-dim1} Let $\R$ be a ring.
For any $a,\,b \in \R$, we have
$\LT(\gen{1+aX,b})= [b:a^{\infty}][X] + \gen{aX}$.
\end{lemma}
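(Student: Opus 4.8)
The plan is to prove the two-sided inclusion directly. Write $\ord$ for the (unique) monomial order on $\R[X]$, so that $\LT = \LT_{\ord}$ and $\LM$ records the $X$-degree. Fix $a, b \in \R$ and set $I = \gen{1+aX, b}$. First I would establish the easy inclusion $\LT(I) \supseteq [b:a^\infty][X] + \gen{aX}$. The generator $aX$ is immediate: $\LT(1+aX) = aX$ when $a \neq 0$ (and when $a = 0$ the term $aX$ contributes nothing), so $aX \in \LT(I)$. For the other piece, suppose $x \in [b:a^\infty]$, say $x a^n \in \gen{b}$. The idea is to exhibit, for each $k \geq 0$, an element of $I$ whose leading term is (a unit multiple of) $x X^k$. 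Concretely, one multiplies the relation $x a^n b^{-1}\cdot b = x a^n$ appropriately: note $b \cdot (\text{something}) = x a^n$ in $\R$, and $(1+aX)$ is $\ord$-monic-like in the sense that multiplying $b$ by $X^{n+k}$ and then subtracting suitable multiples of $X^j(1+aX)$ lets one cascade down the $X$-powers. More precisely, I would show $a^n X^{n} b \equiv (-1)^n b \pmod{\gen{1+aX}}$ up to the ideal generated by $aX$ — using $aX \equiv -1$, hence $a^n X^n \equiv (-1)^n$ modulo $\gen{1+aX}$ — so that $b X^{n+k}$ combined with these congruences produces a polynomial in $I$ with leading term a $\pm$unit multiple of $x X^k$ once $x a^n \in \gen b$. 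Working this out carefully for the smallest witnessing $n$ gives $x X^k \in \LT(I)$ for all $k$, hence $[b:a^\infty][X] \subseteq \LT(I)$.

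The substantive direction is $\LT(I) \subseteq [b:a^\infty][X] + \gen{aX}$. By definition $\LT(I)$ is generated by $\{\LT(h) : h \in I \setminus \{0\}\}$, so it suffices to show $\LT(h) \in [b:a^\infty][X] + \gen{aX}$ for every nonzero $h \in I$. Write $h = u(X)(1+aX) + v(X) b$ with $u, v \in \R[X]$. The natural move is to reduce modulo $\gen{aX}$: in $\R[X]/\gen{aX}$ the leading term of $h$ (its top-degree coefficient) must come from $v(X) b$, because $u(X)(1+aX)$ has the property that its degree-$d$ coefficient is $u_d + a u_{d-1}$, and multiplying out, all the "$a$-times" parts land in $\gen{aX}$ in degree $\geq 1$; in degree $0$ the contribution is $u_0$, but a constant term is not a leading term unless $h$ is itself constant. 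So I would argue: if $\deg h \geq 1$, then $\LC(h)$ equals $v_d b$ plus a multiple of $a$ (coming from $a u_{d-1}$), i.e. $\LT(h) = \LC(h) X^d$ and $\LC(h) \in \gen{b} + \gen{a} \subseteq [b:a^\infty] + \gen a$. That is not quite enough — I need $\LC(h) \in [b:a^\infty]$ modulo $\gen{aX}$, which for a degree-$\geq 1$ term means $\LC(h)X^d \in [b:a^\infty][X] + \gen{aX}$, and here the $\gen a$-part gets absorbed into $\gen{aX}$ since $d \geq 1$. The remaining case $\deg h = 0$, i.e. $h = c \in \R$ a nonzero constant in $I$: then $c \in \gen{1+aX, b} \cap \R$, and one shows $c \in [b : a^\infty]$ — indeed reducing $c = u(X)(1+aX) + v(X)b$ and tracking coefficients of $X^{\deg u + 1}$ down to $X^0$ forces $a^{\deg u + 1}$-multiple relations showing $c a^m \in \gen b$ for suitable $m$. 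Then $\LT(h) = c \in [b:a^\infty] = [b:a^\infty][X]$ in degree $0$, as needed.

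The step I expect to be the main obstacle is the degree-$0$ analysis in the second inclusion: showing that any constant $c$ lying in $\gen{1+aX,b}$ actually lies in $[b:a^\infty]$. This requires chasing the coefficient identities in $c = u(X)(1+aX) + v(X)b$ from the top degree downward — each step multiplies the previous relation by $a$ — to conclude $c a^{\deg u + 1} \in \gen b$. It is a finite induction, but it is where the $[\,\cdot : a^\infty]$ colon ideal genuinely enters, so I would write it out with care (and note it uses only that $\R$ is a commutative ring, no valuation or dimension hypothesis). Once that lemma about constants is in hand, reassembling the two inclusions is routine. A clean alternative that sidesteps some bookkeeping is to work in the localization-flavored quotient $\R[X]/\gen{1+aX} \cong \R[1/(-a)]_{\text{subring}} = \R[a^{-1}]$ via $X \mapsto -a^{-1}$, under which $b$ maps to $b$ and the image of $I$ is $\gen b$ in $\R[a^{-1}]$; pulling back describes $I$ and hence $\LT(I)$, with $[b:a^\infty]$ appearing exactly as $\gen b \cap \R$ after clearing denominators — but making this constructive and matching it with the leading-term description still requires the same coefficient chase, so I would present the direct argument above.
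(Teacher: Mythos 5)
Your overall two\mbox{-}inclusion strategy is workable, and your treatment of the easy inclusion and of the constant case is on the right track; but the degree\mbox{-}$\geq 1$ case of the inclusion $\LT(I)\subseteq [b:a^{\infty}][X]+\gen{aX}$ has a genuine gap. Writing $h=u(X)(1+aX)+v(X)b$ of degree $d\geq 1$, you assert that $\LC(h)$ is $v_db$ plus a multiple of $a$, i.e.\ $\LC(h)\in\gen{a}+\gen{b}$. This silently drops the term $u_d$ from the degree-$d$ coefficient $u_d+au_{d-1}+v_db$; in other words it assumes there is no cancellation in degrees above $d$ (that $\deg u<d$). That assumption can fail, and the asserted membership is false in general: take $\R=\ZZ$, $a=2$, $b=4$. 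Then $1=(1-2X)(1+2X)+4X^{2}\in\gen{1+2X,\,4}$, hence $h=X$ lies in the ideal, has degree $1$, and $\LC(h)=1\notin\gen{2}+\gen{4}=\gen{2}$. (The lemma itself is of course unharmed: $1\cdot 2^{2}\in\gen{4}$, so $1\in[4:2^{\infty}]$ and $X\in[b:a^{\infty}][X]$.) So the colon ideal $[b:a^{\infty}]$ genuinely enters in all degrees, not only in your constant case, and your justification for the $d\geq1$ step does not stand as written.

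The repair is exactly the top\mbox{-}down coefficient chase you reserve for constants, applied above degree $d$. Working modulo $\gen{b}$, the coefficients of $h$ in degrees $j>d$ vanish, giving $u_j\equiv-au_{j-1}\ (\mathrm{mod}\ \gen{b})$ for $d<j\leq\deg u$ and $au_{\deg u}\equiv 0\ (\mathrm{mod}\ \gen{b})$; chaining these yields $a^{\deg u-d+1}u_d\in\gen{b}$, i.e.\ $u_d\in[b:a^{\infty}]$, whence $\LC(h)=u_d+au_{d-1}+v_db\in[b:a^{\infty}]+\gen{a}$, and for $d\geq1$ the $\gen{a}$ part is absorbed into $\gen{aX}$ as you say (the same chase with $d=0$ gives your constant case). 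With this fix your argument closes; note that the paper itself only refers to the solution of Exercise~387 in \cite{Y5} for this lemma, so a self\mbox{-}contained proof must contain precisely this chase. A small simplification for your first inclusion: since $(aX)^{n}\equiv(-1)^{n}\ (\mathrm{mod}\ \gen{1+aX})$, any $x$ with $xa^{n}\in\gen{b}$ satisfies $(-1)^{n}x\in\gen{1+aX,\,b}$, so $[b:a^{\infty}]\subseteq I\cap\R$ and hence $[b:a^{\infty}][X]\subseteq\LT(I)$ directly, without producing a polynomial of each degree $k$.
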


\begin{proof}{Proof.} The reader can consult the solution of Exercise 387 in \cite{Y5}.
\end{proof}


\begin{proof}{Proof of Theorem \ref{ThBC}}  (1) ``$\Leftarrow$" See \cite{LQY}.
\smallskip

\n ``$\Rightarrow$" Let us first prove that $\R$ has Krull dimension $\leq 1$.
Pick  $a,\,b \in \R \setminus \{0\}$. Since ${\rm gcd}(1+aX,\,b)=1$ and $\R\gen{X}$ is a B\'ezout domain,
the ideal $\gen{1+aX,b}$ of $\R[X]$  contains a monic polynomial. By virtue of Lemma \ref{imonic-dim1},
we have $1 \in [b:a^{\infty}] + \gen{a}$ and thus there exist $\alpha \in \R$ and $x\in [b:a^{\infty}] $  such that $1=x + \alpha \,a$.
It follows that there exists $n \in \mathbb{N}$ such that
$(1- \alpha \,a)a^n \in \gen{b}$, as desired. Since $\R$ is a gcd domain (see Lemma \ref{LemGCD})  of Krull dimension $\leq 1$,
it is a B\'ezout domain   by \cite[Theorem~XI.3.12]{LQ}.

\smallskip \n (2) As being a Pr\"ufer domain and having Krull dimension $\leq 1$ are local-global properties,
we can suppose that $\R$ is local. A local Pr\"ufer domain is a valuation domain and thus a B\'ezout domain.
The result follows from (1).
\end{proof}
\medskip

Hereafter the main result of this paper.

\begin{theorem} \label{ThBCRational}
 Let $\R$ be a ring, and  ${\ord}$   a rational monomial order  on $\Pol$.
 \begin{itemize}
\item [{\rm (1)}]  $\R_{{\ord}}\gen{X_1,\ldots,X_n}$ is a B\'ezout domain  if and only if ~$\R$ is a
B\'ezout domain of Krull dimension $\leq 1$. In this case, $\R_{{\ord}}\gen{X_1,\ldots,X_n}$ has Krull dimension $\leq 1$.

\item [{\rm (2)}] $\R_{{\ord}}\gen{X_1,\ldots,X_n}$ is a Pr\"ufer domain  if and only if ~$\R$ is a
Pr\"ufer domain of Krull dimension $\leq 1$.
In this case, $\R_{{\ord}}\gen{X_1,\ldots,X_n}$ has Krull dimension $\leq 1$.
\end{itemize}
\end{theorem}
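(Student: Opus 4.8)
The plan is to reduce the rational case to the already-established lexicographic case (Proposition \ref{PropLex}) by transporting everything through the ring homomorphisms $\phi_M$ and $\phi_L$ constructed in Lemma \ref{LemPhiL}. Fix a rank-$n$ matrix $M \in \MatZ$ defining $\ord$, choose $k \in \NN_{>0}$ with $L := k M^{-1} \in \MatZ$, and set $\A := \SCR = \R_{\ord}\gen{X_1,\ldots,X_n}$ and $\B := \SCRl = \R_{\tlex}\gen{X_1,\ldots,X_n}$. By Lemma \ref{LemPhiL} we have injective ring homomorphisms $\phi_M \colon \A \to \B$ and $\phi_L \colon \B \to \A$ with $\phi_M \circ \phi_L = \phi_L \circ \phi_M = \phi_{k\Id_n}$. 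Note first that $\A$ is a domain iff $\B$ is: each $\phi$ is injective, and $\phi_{k\Id_n}$ is injective, so $\A \hookrightarrow \B \hookrightarrow \A$ exhibits $\A$ as a subring of $\B$ and vice versa up to the faithfully-flat-like sandwich; in particular $\A$ has no zero divisors precisely when $\B$ has none. Since by Proposition \ref{PropLex} $\B$ is a B\'ezout (resp.\ Pr\"ufer) domain exactly when $\R$ is a B\'ezout (resp.\ Pr\"ufer) domain of Krull dimension $\le 1$, it suffices to prove that $\A$ is a B\'ezout (resp.\ Pr\"ufer) domain of Krull dimension $\le 1$ if and only if $\B$ is, and that in the affirmative case $\dim \A \le 1$.

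The key mechanism is the following: for a finitely generated ideal $I = \gen{f_1,\ldots,f_p}$ of $\A$ (we may clear denominators and assume the $f_i$ lie in $\R[X_1^{\pm 1},\ldots,X_n^{\pm 1}]$, indeed in $\Pol$ after multiplying by monomials, which are units in $\A$), the extended ideal $\phi_M(I)\B$ of $\B$ is generated by $\phi_M(f_1),\ldots,\phi_M(f_p)$, and by Lemma \ref{Spol} and Proposition \ref{Sinfty} the $S_{\tlex}$-computation over $\B$ is, monomial-multiple by monomial-multiple, the $\phi_M$-image of the $S_\ord$-computation over $\A$; since monomials are units in these localized rings, $\phi_M$ sets up a bijection between finitely generated ideals of $\A$ that contain a monomial (equivalently, all f.g. ideals, after rescaling) and those of $\B$, compatible with sums, products, and containment. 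Concretely: $\phi_L(\phi_M(I)\B)\A = \phi_{k\Id_n}(I)\A$, and because the $k$-th power map $X_i \mapsto X_i^k$ is a faithfully flat (free) extension that is split by multiplication of exponents, $\phi_{k\Id_n}(I)\A$ is principal iff $I$ is, and $\phi_{k\Id_n}(I)\A$ is locally principal (arithmetical condition) iff $I$ is. Thus: $I$ principal in $\A$ $\Longleftrightarrow$ $\phi_M(I)\B$ principal in $\B$, and similarly for the arithmetical/Pr\"ufer condition applied to all pairs of elements via \eqref{eqari}. This gives both B\'ezout equivalences and the Pr\"ufer equivalences; for the Pr\"ufer direction one can alternatively just invoke that being arithmetical is a local-global property and that a valuation domain stays a valuation domain under these constructions, mirroring the proof of Theorem \ref{ThBC}(2). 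For the Krull dimension bound, use that $\dim \A$ and $\dim \B$ coincide: the ring maps $\phi_M,\phi_L$ are integral-ish (each $X_i$ in the source goes to a monomial in the target, and conversely $X_i^k$ in the target is hit), so they induce finite-to-one maps on spectra preserving and reflecting chains of primes; hence $\dim\A = \dim\B \le 1$ by Proposition \ref{PropLex}.

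The main obstacle will be verifying cleanly and constructively that $\phi_M$ induces an honest bijection between finitely generated ideals of $\A$ and $\B$ with the stated algebraic compatibilities, i.e.\ that the intermediate monomial factors $\cal N$ appearing in Lemma \ref{Spol}, Lemma \ref{Spolmon} and Proposition \ref{Sinfty} really do become units after localization, so that no information about the ideal is lost when passing back and forth; and dually that $\phi_{k\Id_n}$ (the $X_i \mapsto X_i^k$ substitution) reflects principality and the arithmetical condition. This is where the hypothesis that $\R$ is a domain of Krull dimension $\le 1$ gets used — via Proposition \ref{monic} and Corollary \ref{CorSInfty}, which guarantee that after factoring out a single element $a \in \R$ the transported ideal contains a monomial, so that the "contains a monomial'' hypothesis needed to apply $\phi_M$ as a bijection is available. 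Once that dictionary is in place, both items (1) and (2) follow by translating the corresponding statements of Proposition \ref{PropLex} across it, and the Krull dimension claim follows from the spectral correspondence; I would organize the write-up as (a) the ring-theoretic sandwich and domain equivalence, (b) the f.g.-ideal dictionary via $\phi_M/\phi_L$, (c) deduction of the B\'ezout and Pr\"ufer equivalences, (d) the dimension bound.
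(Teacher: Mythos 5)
Your high-level strategy (transfer everything to the lexicographic case via $\phi_M$ and $\phi_L$ and then quote Proposition \ref{PropLex}) is exactly the paper's, but the central mechanism you rely on has a genuine gap. You claim that $\phi_M$ sets up a bijection between finitely generated ideals of $\A := \SCR$ and of $\B := \SCRl$ compatible with sums, products and containment, and that $\phi_{k \cdot \Id_n}(I)\A$ is principal (resp.\ locally principal) if and only if $I$ is, ``because'' $X_i \mapsto X_i^k$ is a faithfully flat free extension split by multiplication of exponents. Neither claim is proved, and both rest on principles that are false in general: since $\phi_M \circ \phi_L = \phi_{k \cdot \Id_n}$ is not surjective, applying $\phi_L$ and then extending back via $\phi_M$ does not return a finitely generated ideal of $\B$ to itself (for instance $\gen{X_1 + a}$ is sent to $\gen{X_1^k + a}$), so there is no such dictionary; and principality does not descend along finite free (hence faithfully flat) extensions in general --- a non-principal ideal of a Dedekind domain can become principal in a finite free extension --- so the ``only if'' half of your descent claim needs an actual argument, which is the whole difficulty. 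Your dimension argument is likewise unsubstantiated: it is not clear that $\B$ is integral over $\phi_M(\A)$ (the inverses of $\tlex$-monic denominators require a separate argument), and a Spec-based chain argument is in any case not available in the paper's constructive (Bishop-style) framework.

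The paper never transports ideals at all; it transports only specific first-order statements. For the B\'ezout equivalence (Proposition \ref{PropBCRational}) it exploits the gcd structure: $\SCR$ is a gcd domain exactly when $\R$ is (Lemma \ref{LemGCD}), so B\'ezoutness reduces to showing that coprime pairs are comaximal; coprimality is transferred across $\phi_M$ (Lemma \ref{LemBezout}), while comaximality and the containment of monic polynomials are descended along $\phi_{k \cdot \Id_n}$ using that $\Pol$ is free over $\Polk$ (Lemma \ref{LemIndependence}, Propositions \ref{PropQ} and \ref{PropQScr}, Lemma \ref{LemGcdAndBezout}, Corollary \ref{CorSInfty}). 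The Pr\"ufer statement then follows by a standard localization argument (Corollary \ref{CorPrufer}), and the Krull dimension equality $\dim \SCR = \dim \SCRl$ (Proposition \ref{PropKrullDim}) is proved constructively via the characterization of Krull dimension by $\tlex$-dependence relations (Proposition \ref{PropKrullDimIndep}), again descending such relations through $\phi_{k \cdot \Id_n}$ by Proposition \ref{PropQScr}. To repair your proposal you would need to replace the ideal dictionary and the flat-descent and spectral claims by element-level transfer statements of this kind.
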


We first address the part of Theorem \ref{ThBCRational} which pertains to Krull dimension by establishing the following:

\begin{proposition}\label{PropKrullDim}  Let $\R$ be a ring. Let $\ord$ be a rational monomial order on $\Pol$. Then we have:
$$\dim \SCR =\dim \SCRl\; \stackrel{{\small {\rm \cite{CEK}}}}{ =} \; \dim \Pol - n.$$
\end{proposition}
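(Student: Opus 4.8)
\textbf{Plan for the proof of Proposition \ref{PropKrullDim}.}
The plan is to prove the first equality $\dim \SCR = \dim \SCRl$ by exhibiting the mutually inverse-up-to-Frobenius ring homomorphisms furnished by Lemma \ref{LemPhiL}, and then to invoke the cited result of \cite{CEK} for the second equality. First I would recall that $\ord$ is given by a rank $n$ matrix $M \in \MatZ$ (which, after adding suitable multiples of rows, we may take with nonnegative entries), so $M$ is invertible over $\QQ$; choosing $k \in \NN_{>0}$ with $L := k M^{-1} \in \MatZ$ puts us exactly in the situation of Lemma \ref{LemPhiL}, which gives injective ring homomorphisms $\phi_M \colon \A \to \B$ and $\phi_L \colon \B \to \A$ with $\phi_M \circ \phi_L = \phi_L \circ \phi_M = \phi_{k\Id_n}$, where $\A = \SCR$ and $\B = \SCRl$.

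The key point is then that $\phi_{k\Id_n}$ is, on each of $\A$ and $\B$, an \emph{integral} ring endomorphism whose image has the same Krull dimension as the whole ring: indeed $\phi_{k\Id_n}$ sends $X_i$ to $X_i^k$, so $\A$ is generated as a module over $\phi_{k\Id_n}(\A)$ by the finitely many monomials $X_1^{e_1}\cdots X_n^{e_n}$ with $0 \le e_i < k$, and each such $X_i$ satisfies $X_i^k - \phi_{k\Id_n}(X_i) = 0$, an integral dependence relation. Hence $\dim \A = \dim \phi_{k\Id_n}(\A)$ by the classical fact that Krull dimension is preserved under integral extensions (a statement which, in the constructive setting of the paper, can be taken from \cite{LQ}); the same holds with $\A$ replaced by $\B$. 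Now $\phi_{k\Id_n}(\A) = \phi_M(\phi_L(\A)) \subseteq \phi_M(\B) \subseteq \A$, so $\dim \A = \dim \phi_{k\Id_n}(\A) \le \dim \phi_M(\B) \le \dim \A$, forcing $\dim \phi_M(\B) = \dim \A$; since $\phi_M$ is injective, $\dim \B = \dim \phi_M(\B) = \dim \A$. This gives $\dim \SCR = \dim \SCRl$. For the second equality I would simply note that $\SCRl = \R\gen{X_1,\dots,X_n}$ is the iterated Serre conjecture ring, and that the computation $\dim \R\gen{X_1,\dots,X_n} = \dim \Pol - n$ is precisely the content of the cited result \cite{CEK}; alternatively one can iterate the univariate formula $\dim \R\gen{X} = \dim \R[X] - 1$ valid for any ring $\R$.

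The main obstacle I anticipate is the integrality/dimension-preservation step carried out \emph{constructively}: classically "Krull dimension is invariant under integral ring extensions" is immediate, but in Bishop-style constructive mathematics one must use the order-theoretic characterization of Krull dimension (via iterated boundary ideals, as in \cite{LQ}) and check that passing between $\phi_{k\Id_n}(\A)$ and $\A$ does not change the relevant chains of (co-)radicals. Concretely, one has to verify that $\A$ is module-finite — hence integral — over $\phi_{k\Id_n}(\A)$, which is the monomial-basis argument above, and then quote the constructive version of the going-up / lying-over behaviour of Krull dimension for finite extensions. A secondary, purely bookkeeping subtlety is ensuring $M$ can be chosen with nonnegative integer entries of full rank so that $\phi_M$ genuinely maps $\Pol$ into $\Pol$ (not merely $\LPol$ into $\LPol$); this is handled exactly as in the discussion preceding Lemma \ref{LemPhiM} and in the proof of Lemma \ref{LemPhiL}, where one works inside the Laurent rings and uses that both $S_{\ord}$ and $S_{\tlex}$ contain all monomials.
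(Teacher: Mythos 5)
There is a genuine gap at the heart of your argument: the claim that $\A := \SCR$ is module-finite (or even just integral) over $\phi_{k \cdot \Id_n}(\A)$, generated by the monomials $X^{\alpha}$ with $\alpha \in \{0,\dots,k-1\}^n$. The relation $X_i^k - \phi_{k \cdot \Id_n}(X_i)=0$ only shows that the \emph{variables} are integral over the image; it says nothing about the denominators. Indeed, writing out what membership of $\frac{1}{s}$ (with $s \in S_{\ord}$) in $\sum_{\alpha} \phi_{k \cdot \Id_n}(\A)\, X^{\alpha}$ means and clearing denominators, your module-finiteness claim is literally equivalent to the assertion that every $\ord$-monic polynomial $s \in \Pol$ divides, in $\Pol$, some $\ord$-monic polynomial lying in $\Polk$. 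That is a nontrivial statement (provable, e.g., by a norm/Cayley--Hamilton argument on the free $\Polk$-module $\Pol$, with some care over an arbitrary commutative ring), and you give no argument for it. Two further points need repair: Krull dimension is not monotone under ring inclusion, so the chain of inequalities you write must instead be justified by observing that \emph{both} inclusions in the sandwich are integral extensions (which again rests on the missing integrality claim); and the sandwich itself has the maps crossed --- by Lemma \ref{LemPhiL} it is $\phi_L$ that maps $\B := \SCRl$ into $\A$, so the correct chain is $\phi_{k \cdot \Id_n}(\A) = \phi_L(\phi_M(\A)) \subseteq \phi_L(\B) \subseteq \A$, not one involving $\phi_M(\B) \subseteq \A$ (nothing guarantees $\phi_M(S_{\tlex}) \subseteq S_{\ord}$). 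Finally, as you yourself anticipate, the transfer of Krull dimension along integral extensions would still have to be carried out constructively.

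For comparison, the paper's proof sidesteps integrality entirely: it uses the constructive characterization of Krull dimension by $\tlex$-dependent sequences (Proposition \ref{PropKrullDimIndep}, i.e.\ \cite[Proposition XIII.2.8]{LQ}) and transfers dependence relations between $\A$ and $\B$ via $\phi_M$ and $\phi_L$. The effect of the composite $\phi_{k \cdot \Id_n}$ is undone not by an integrality argument on the localized rings, but by Propositions \ref{PropQ} and \ref{PropQScr}, which descend a dependence relation for the $\tilde f_i = \phi_{k \cdot \Id_n}(f_i)$ to one for the $f_i$ by splitting coefficients along the free basis of $\Pol$ over $\Polk$ (Lemma \ref{LemIndependence}) --- the one place where the monomial-basis decomposition you invoke is actually available. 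If you want to keep your integral-extension route, you must supply the divisibility statement above (for instance via the determinant of multiplication by $s$ on $\Pol$ as a $\Polk$-module, showing its $\ord$-leading coefficient is a unit) and then cite the constructive results on Krull dimension and integral extensions from \cite{LQ}; as written, the proof does not go through.
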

Note that $\dim \SCR = \infty$ obviously holds if $\dim \R = \infty$.
Here it is worth recalling that the equality $\dim \Pol = \dim \R + n$ is not always true.
Nevertheless, it holds if $\R$ is Noetherian or Pr\"ufer.
Let us also observe that Proposition \ref{PropKrullDim} does not hold if $\ord$ is an irrational order, see Example \ref{CounterExample} below.
\medskip

The next results prepare the proof of Proposition \ref{PropKrullDim}.

\begin{lemma} \label{LemIndependence}
Let $\R$ be a ring. Let $n, k \in \NN_{> 0}$ and let $\Rk = \{0, \dots, k - 1\}^n$.
Then the ring $\Pol$ is freely generated as an $\Polk$-module
by the set of monomials $\left\{X^{\alpha} \, \vert \, \alpha \in \Rk \right\}$.
\end{lemma}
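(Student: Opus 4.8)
The plan is to show that the set $B=\{X^{\alpha}\mid \alpha\in\Rk\}$ is both a generating set and a linearly independent set for $\Pol$ over the subring $S:=\Polk$. Write $X^k$ as shorthand for the tuple $(X_1^k,\dots,X_n^k)$, so $S=\R[X_1^k,\dots,X_n^k]$.

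For the generating part, I would argue one variable at a time, or more slickly observe that every monomial $X^{\gamma}$ with $\gamma=(\gamma_1,\dots,\gamma_n)\in\NN^n$ can be written uniquely via Euclidean division in each coordinate: $\gamma_i=k q_i+r_i$ with $0\le r_i<k$. Then $X^{\gamma}=(X^k)^{q}\cdot X^{r}$ where $q=(q_1,\dots,q_n)$, $r=(r_1,\dots,r_n)\in\Rk$, and $(X^k)^q=\prod_i (X_i^k)^{q_i}\in S$. Hence $X^{\gamma}\in S\cdot X^{r}$, and since every element of $\Pol$ is an $\R$-linear combination of monomials, $\Pol=\sum_{\alpha\in\Rk}S\cdot X^{\alpha}$. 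So $B$ spans.

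For linear independence, suppose $\sum_{\alpha\in\Rk}s_{\alpha}X^{\alpha}=0$ with $s_{\alpha}\in S$; I must show every $s_{\alpha}=0$. Write each $s_{\alpha}=\sum_{\beta\in\NN^n}c_{\alpha,\beta}(X^k)^{\beta}=\sum_{\beta}c_{\alpha,\beta}X^{k\beta}$ with $c_{\alpha,\beta}\in\R$, almost all zero. Then the relation becomes $\sum_{\alpha\in\Rk}\sum_{\beta\in\NN^n}c_{\alpha,\beta}X^{k\beta+\alpha}=0$ in $\Pol$. The key point is that the map $(\alpha,\beta)\mapsto k\beta+\alpha$ from $\Rk\times\NN^n$ to $\NN^n$ is a bijection — this is exactly the uniqueness of Euclidean division coordinatewise. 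Therefore the monomials $X^{k\beta+\alpha}$ appearing are pairwise distinct, and since the monomials of $\Pol$ form an $\R$-basis, every coefficient $c_{\alpha,\beta}$ vanishes, whence $s_{\alpha}=0$ for all $\alpha$.

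I do not expect any real obstacle here: both halves reduce to the uniqueness and existence in coordinatewise division with remainder, combined with the fact that monomials form a free $\R$-basis of the polynomial ring $\Pol$. The only thing to be slightly careful about is bookkeeping with the multi-index notation and the bijection $\Rk\times\NN^n\to\NN^n$, $(\alpha,\beta)\mapsto k\beta+\alpha$; making that bijection explicit at the outset streamlines both the spanning and the independence arguments simultaneously.
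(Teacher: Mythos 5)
Your proof is correct and follows essentially the same route as the paper's: both rest on the coordinatewise division with remainder (equivalently, the partition of exponents into residue classes modulo $k\ZZ^n$) together with the fact that the monomials form an $\R$-basis of $\Pol$. No gap to report.
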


\begin{proof}{Proof.}
Let $f = \sum_{\beta} a_{\beta} X^{\beta} \in \Pol$, with every $a_{\beta}$ lying in $\R$.
Then $f = \sum_{\alpha \in \mathcal{R}_k} b_{\alpha}(f) X^{\alpha}$ where $b_{\alpha}(f) \in \Polk$
is uniquely defined by
$b_{\alpha}(f) := \frac{1}{X^\alpha} \sum_{\beta \equiv \alpha \,\mod \,k \ZZ^n} a_{\beta} X^{\beta}$.
Conversely, if $f = \sum_{\alpha \in \mathcal{R}_k} c_{\alpha} X^{\alpha}$ for some $c_{\alpha} \in \Polk$,
then it is immediate to check that $c_{\alpha} = b_{\alpha}(f)$ for every $\alpha \in \Rk$. Therefore
$\sum_{\alpha \in \mathcal{R}_k} c_{\alpha} X^{\alpha} = 0$, if and only if, $c_{\alpha} = 0$ for every $\alpha$.
\end{proof}

\begin{proposition} \label{PropQ}
Let $\R$ be a ring and let $\ord$ be a monomial order on $\A := \Pol$.
Let $k \in \NN_{> 0}$, $f_1, \dots, f_p \in \A$ and set
$\tilde{f_i} := \phi_{k \cdot \Id_n}(f_i)$.
Assume there is $P \in \PolY$ such that $\TCl(P)$ is $\ord$-monic (resp. $\TCl(P) = 1$) and
$P(\tilde{f_1}, \dots, \tilde{f}_p) = 0$.
Then there is $Q \in \PolY$ such that
$\TCl(Q)$ is $\ord$-monic (resp. $\TCl(Q) = 1$) and $Q(f_1, \dots, f_p) = 0$.
\end{proposition}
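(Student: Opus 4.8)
The idea is to exploit the freeness established in Lemma~\ref{LemIndependence} to descend the vanishing relation $P(\tilde f_1,\dots,\tilde f_p)=0$ from $\A$ to $\A$ itself, only with the variables re-indexed. Here the key observation is that each $\tilde f_i=\phi_{k\cdot\Id_n}(f_i)$ lives in the subring $\Polk$, so the polynomial identity $P(\tilde f_1,\dots,\tilde f_p)=0$ is an identity inside $\Polk$. Since $\phi_{k\cdot\Id_n}\colon\Pol\to\Polk$ is a ring isomorphism (it sends $X_i\mapsto X_i^k$, which is a bijection of monomials onto $\Rk$-free generators—use Lemma~\ref{LemIndependence} with $p=1$, or simply note it is injective with image exactly $\Polk$), we can apply its inverse to the relation $P(\tilde f_1,\dots,\tilde f_p)=0$ and obtain $P(f_1,\dots,f_p)=0$ read inside $\Pol$. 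So a first, naive answer is: take $Q:=P$. The only subtlety is the behaviour of $\TCl$: we must check that the trailing coefficient of $P$ with respect to $\tlex$ on $\PolY$ is unaffected by this substitution — but it is, because $\TCl(P)$ is computed purely from the coefficients of $P$ as a polynomial in $Y_1,\dots,Y_p$ over $\A$, and those coefficients are not touched.

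First I would make precise that $\phi_{k\cdot\Id_n}$ is an isomorphism $\Pol\xrightarrow{\sim}\Polk$: injectivity is immediate since it is strictly increasing on monomials (cf.\ Lemma~\ref{LemPhiM}), and surjectivity onto $\Polk$ is clear. Then, writing $P=\sum_{\gamma} c_\gamma\,Y^\gamma$ with $c_\gamma\in\A$, the hypothesis says $\sum_\gamma c_\gamma\,\tilde f^{\,\gamma}=0$ in $\A$, where $\tilde f^{\,\gamma}=\prod_i\tilde f_i^{\,\gamma_i}=\phi_{k\cdot\Id_n}\!\big(\prod_i f_i^{\,\gamma_i}\big)$. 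Applying $\phi_{k\cdot\Id_n}^{-1}$ (which fixes $\R$ and hence the coefficients $c_\gamma$ — wait, $c_\gamma\in\Pol$, so one must be slightly careful) gives $\sum_\gamma \phi_{k\cdot\Id_n}^{-1}(c_\gamma)\,f^{\,\gamma}=0$. So the honest statement is: set $Q:=\sum_\gamma \phi_{k\cdot\Id_n}^{-1}(c_\gamma)\,Y^\gamma\in\PolY$; then $Q(f_1,\dots,f_p)=0$.

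The remaining point — and the one I expect to need the most care — is to verify that $\TCl(Q)$ is $\ord$-monic (resp.\ equals $1$) given that $\TCl(P)$ is. Since $\TCl(Q)=\phi_{k\cdot\Id_n}^{-1}(\TCl(P))$ provided $\phi_{k\cdot\Id_n}^{-1}$ does not alter which monomial in $Y$ is trailing — which it does not, as it acts coefficient-wise on $\PolY$ — we are reduced to: if $c\in\A$ is $\ord$-monic, is $\phi_{k\cdot\Id_n}^{-1}(c)$ $\ord$-monic? By Lemma~\ref{LemPhiM} applied to $\ord$ itself (or rather its defining matrix precomposed with $k\cdot\Id_n$, which defines the same order), $\phi_{k\cdot\Id_n}$ is strictly increasing on monomials, hence so is its inverse on $\Polk$; therefore $\phi_{k\cdot\Id_n}^{-1}$ preserves leading monomials and leading coefficients, so it sends $\ord$-monic polynomials to $\ord$-monic polynomials and $1$ to $1$. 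This closes the argument. The whole proof is short; its only delicate spot is keeping straight the two roles of $\phi_{k\cdot\Id_n}$ (acting on the coefficient ring $\A$, versus on $\PolY$ coefficient-wise) and checking the trailing-term bookkeeping survives both.
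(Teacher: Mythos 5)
Your argument has a genuine gap at its central step. You assert that the identity $P(\tilde f_1,\dots,\tilde f_p)=0$ ``is an identity inside $\Polk$'' and then define $Q:=\sum_\gamma \phi_{k\cdot\Id_n}^{-1}(c_\gamma)\,Y^\gamma$. But the coefficients $c_\gamma$ of $P$ are arbitrary elements of $\A=\Pol$; nothing in the hypothesis places them in the subring $\Polk$. The map $\phi_{k\cdot\Id_n}$ is an isomorphism of $\Pol$ onto $\Polk$ only, so $\phi_{k\cdot\Id_n}^{-1}(c_\gamma)$ is simply undefined when, say, $c_\gamma=X_1$ and $k=2$ (its preimage would need fractional exponents). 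You noticed this yourself (``wait, $c_\gamma\in\Pol$, so one must be slightly careful'') but then proceeded as if the inverse could be applied coefficient-wise anyway; that is exactly the point where the proof breaks, and there is no way to apply $\phi_{k\cdot\Id_n}^{-1}$ term by term to the relation $\sum_\gamma c_\gamma\tilde f^{\,\gamma}=0$, since the individual summands $c_\gamma\tilde f^{\,\gamma}$ need not lie in $\Polk$ even though their sum is $0$.

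The missing idea is the one for which Lemma \ref{LemIndependence} is stated: $\Pol$ is a free $\Polk$-module with basis $\{X^\alpha \mid \alpha\in\Rk\}$. Decompose each coefficient as $c_\beta=\sum_{\alpha\in\Rk}c_{\alpha\beta}X^\alpha$ with $c_{\alpha\beta}\in\Polk$; then $P(\tilde f_1,\dots,\tilde f_p)=0$ splits, by freeness, into one relation $\sum_\beta c_{\alpha\beta}\tilde f^{\,\beta}=0$ for each $\alpha\in\Rk$. If $\beta_{\min}$ is the trailing multidegree of $P$ for $\tlex$, the $\ord$-leading term of $c_{\beta_{\min}}$ lies in exactly one component, say $c_{\alpha_0\beta_{\min}}X^{\alpha_0}$, so $c_{\alpha_0\beta_{\min}}$ is $\ord$-monic (and in the case $\TCl(P)=1$ one takes $\alpha_0=0$, giving $c_{\alpha_0\beta_{\min}}=1$); in particular it is nonzero, so $\beta_{\min}$ is still the trailing multidegree of $\tilde Q:=\sum_\beta c_{\alpha_0\beta}Y^\beta$. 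This $\tilde Q$ has all its coefficients in $\Polk$ and satisfies $\tilde Q(\tilde f_1,\dots,\tilde f_p)=0$, and only now does your final step become legitimate: applying $\phi_{k\cdot\Id_n}^{-1}$ (which, being strictly increasing on monomials, preserves leading coefficients) to the coefficients of $\tilde Q$ yields the required $Q$. In other words, your trailing-coefficient bookkeeping at the end is fine, but it must be preceded by the component-extraction argument, which is the actual content of the paper's proof and is absent from yours.
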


\begin{proof}{Proof.}
Writing $P = \sum_{\beta} c_{\beta} Y^{\beta}$ and
$c_{\beta} = \sum_{\alpha \in \Rk} c_{ \alpha \beta} X^{\alpha}$
with $\Rk$ as in Lemma \ref{LemIndependence} and $c_{\alpha \beta} \in \Polk$,
the identity $P(\tilde{f_1}, \dots, \tilde{f}_m) = 0$ can be reworded as
\begin{equation} \label{EqCalphaBeta}
\sum_{\alpha \in \Rk} \left(\sum_{\beta} c_{\alpha \beta} \tilde{f}^{\beta} \right) X^{\alpha} = 0.
\end{equation}

\n Let $\beta_{\min}$ be the multidegree of the trailing term of $P$ with respect to $\tlex$.
Since by hypothesis the polynomial $c_{\beta_{\min}}$ is $\ord$-monic (resp. $c_{\beta_{\min}} = 1$),
there is a multidegrees $\alpha_0 \in \Rk$ such that $c_{\alpha_0 \beta_{\min}}$ is
$\ord$-monic (resp. $c_{\alpha\beta_{\min}} = 1$).
Let $\tilde{Q} := \sum_{\beta} c_{\alpha_0 \beta} Y^{\beta}$.
It follows from (\ref{EqCalphaBeta}) and Lemma \ref{LemIndependence} that
$\tilde{Q}(\tilde{f_1}, \dots, \tilde{f}_p) = 0$, hence the result.
\end{proof}

\begin{proposition} \label{PropQScr}
Let $\R$ be a ring.
Let $\A := \SCR$ and $p, k \in \NN_{> 0}$. Let $f_1, \dots, f_p \in \A$ and set
$\tilde{f_i} := \phi_{k \cdot \Id_n}(f_i)$ for $i \in \{1, \dots, p\}$.
Assume there is $P \in \PolY$ such that $\TCl(P) = 1$ and
$P(\tilde{f_1}, \dots, \tilde{f}_m) = 0$.
Then there is $Q \in \PolY$ such that $\TCl(Q) = 1$ and $Q(f_1, \dots, f_p) = 0$.
\end{proposition}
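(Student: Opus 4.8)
The plan is to deduce this from the case $\A = \Pol$ already handled in Proposition \ref{PropQ}, by clearing denominators, and then to renormalize the resulting relation using the fact that every $\ord$-monic polynomial of $\Pol$ becomes a unit in $\A = \SCR$.

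First I would fix a common denominator $s \in S_\ord$ with $f_i = g_i/s$, $g_i \in \Pol$, together with a common denominator $t \in S_\ord$ for the coefficients of $P = \sum_\beta c_\beta Y^\beta$, so that $t c_\beta \in \Pol$ for every $\beta$. Put $\tilde g_i := \phi_{k\cdot\Id_n}(g_i)$ and $\tilde s := \phi_{k\cdot\Id_n}(s)$; since $\phi_{k\cdot\Id_n}$ is strictly increasing on monomials for $\ord$, it preserves leading coefficients, so $\tilde s$ is again $\ord$-monic and $\tilde f_i = \tilde g_i/\tilde s$. With $|\beta| := \beta_1 + \cdots + \beta_p$ and $D := \max\{|\beta| : c_\beta \neq 0\}$, I would homogenize:
\[
\hat P := \sum_\beta (t c_\beta)\,\tilde s^{\,D - |\beta|}\, Y^\beta \in \Pol[Y].
\]
Substituting $Y_i \mapsto \tilde g_i = \tilde s\,\tilde f_i$ turns each summand into $t c_\beta\,\tilde s^{D}\,\tilde f^{\beta}$ (with $\tilde f^{\beta} := \prod_i \tilde f_i^{\beta_i}$), so $\hat P(\tilde g_1, \dots, \tilde g_p) = \tilde s^{D} t\, P(\tilde f_1, \dots, \tilde f_p) = 0$; since $\ord$-monic polynomials are nonzerodivisors of $\Pol$, the map $\Pol \to \A$ is injective and this identity already holds in $\Pol$.

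Next I would check the hypotheses of Proposition \ref{PropQ} for $g_1, \dots, g_p \in \Pol$ and this $\hat P$. Letting $\beta_0$ be the $\tlex$-trailing $Y$-monomial of $P$, one has $c_\beta = 0$ for $\beta \prec_{\tlex} \beta_0$, so the same monomials have zero coefficient in $\hat P$, while the coefficient of $Y^{\beta_0}$ in $\hat P$ equals $t\,\tilde s^{\,D - |\beta_0|}$ (using $c_{\beta_0} = \TCl(P) = 1$), whose $\ord$-leading coefficient is $1$; in particular it is nonzero, so $\TCl(\hat P) = t\,\tilde s^{\,D - |\beta_0|}$ is $\ord$-monic. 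The $\ord$-monic case of Proposition \ref{PropQ}, applied over $\Pol$, then produces $\hat Q = \sum_\gamma v_\gamma Y^\gamma \in \Pol[Y]$ with $\hat Q(g_1, \dots, g_p) = 0$ and $\TCl(\hat Q) = v_{\gamma_0}$ $\ord$-monic, $\gamma_0$ being the $\tlex$-trailing $Y$-monomial of $\hat Q$.

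Finally I would transport this back to $\A$. Since $g_i = s f_i$ in $\A$, the polynomial $Q_1 := \sum_\gamma v_\gamma\, s^{|\gamma|}\, Y^\gamma \in \Pol[Y] \subseteq \A[Y]$ satisfies $Q_1(f_1, \dots, f_p) = \hat Q(sf_1, \dots, sf_p) = \hat Q(g_1, \dots, g_p) = 0$, and by the same trailing-term computation $\TCl(Q_1) = v_{\gamma_0}\, s^{|\gamma_0|}$ is $\ord$-monic, hence lies in $S_\ord$ and is a unit of $\A$. Then $Q := \TCl(Q_1)^{-1} Q_1 \in \A[Y]$ is well defined, still kills $(f_1, \dots, f_p)$, and has $\TCl(Q) = 1$, as desired. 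The only delicate part is precisely this bookkeeping of trailing terms: one must keep verifying that clearing denominators, homogenizing, and multiplying through by monomials or by $\ord$-monic polynomials neither introduces a $\tlex$-smaller $Y$-monomial with nonzero coefficient nor destroys the $\ord$-monicity of the trailing coefficient, which holds because $\LC_\ord$ is multiplicative with $\LC_\ord(s) = \LC_\ord(\tilde s) = 1$.
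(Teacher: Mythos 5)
Your proof is correct and takes essentially the same route as the paper's: the paper also reduces to Proposition \ref{PropQ} by clearing the $\ord$-monic denominators of the $f_i$ and converting between the conditions ``$\TCl = 1$ over $\SCR$'' and ``$\TCl$ $\ord$-monic over $\Pol$'' (a step it labels ``easy to check''), and your homogenization with powers of $s$ and $\tilde s$, together with the renormalization by the unit $\TCl(Q_1)$, is exactly the bookkeeping that makes this equivalence explicit.
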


\begin{proof}{Proof.}
Let $s_1, \dots, s_p$ be $\ord$-monic polynomials such that $g_i := s_i f_i \in \Pol$.
It is easy to check that the following are equivalent:

\begin{itemize}
\item
There is $Q \in \PolY$ such that $\TCl(Q) = 1$ and $Q(f_1, \dots, f_p) = 0$.
\item
There is $Q \in \Pol[Y_1, \dots, Y_p]$ such that $\TCl(Q)$ is $\prec$-monic and $Q(g_1, \dots, g_p) = 0$.
\end{itemize}
Thus the result follows from Proposition \ref{PropQ}.
\end{proof}

\begin{definition}{\rm
Let $\A$ be a ring. A sequence $(y_1, \dots, y_p)$ of elements of
$\A$ is said to be $\tlex$\emph{-dependent} if there is
$P \in \PolY$ such that $\TCl(P) = 1$ and $P(y_1, \dots, y_p) = 0$.}
\end{definition}

\begin{proposition} \label{PropKrullDimIndep} {\em \cite[Proposition XIII.2.8]{LQ}}
Let $\A$ be a ring and let $p \in \NN_{> 0}$. Then the Krull dimension
of $\A$ is less than $p$ if and only if every sequence of $p$ elements in $\A$ is $\tlex$-dependent.
\end{proposition}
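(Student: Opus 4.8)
The plan is to reduce the statement to the standard constructive characterization of Krull dimension in terms of nested ``collapse'' identities. I would start from the fact (this is essentially the definition of Krull dimension used in \cite[Chapter~XIII]{LQ}) that $\dim \A < p$ holds if and only if for every $(x_1, \dots, x_p) \in \A^p$ there exist $a_1, \dots, a_p \in \A$ and $m_1, \dots, m_p \in \NN$ with
\[
x_p^{m_p}\bigl(x_{p-1}^{m_{p-1}}\bigl(\cdots x_2^{m_2}\bigl(x_1^{m_1}(1 + a_1 x_1) + a_2 x_2\bigr)\cdots + a_{p-1}x_{p-1}\bigr) + a_p x_p\bigr) = 0 ,
\]
where the nesting is written so that the \emph{outermost} variable $x_p$ is the one to which $\tlex$ assigns the highest priority; since the criterion quantifies over all $p$-element sequences, this is only a relabeling of the sequence in \cite{LQ}. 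It then suffices to prove, for a fixed $(x_1, \dots, x_p) \in \A^p$, that such a collapse identity holds if and only if $(x_1, \dots, x_p)$ is $\tlex$-dependent; combined with the above characterization, this yields the proposition.

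For the implication ``collapse identity $\Rightarrow$ $\tlex$-dependence'', I would substitute $Y_i$ for $x_i$ (keeping the $a_i \in \A$ as constants), producing
\[
P := Y_p^{m_p}\bigl(Y_{p-1}^{m_{p-1}}\bigl(\cdots Y_2^{m_2}\bigl(Y_1^{m_1}(1 + a_1 Y_1) + a_2 Y_2\bigr)\cdots + a_{p-1}Y_{p-1}\bigr) + a_p Y_p\bigr) \in \PolY ,
\]
so that $P(x_1, \dots, x_p) = 0$, and then check that $\TCl(P) = 1$. Expanding $P$ one layer at a time, its monomials with nonzero coefficient lie among $Y_1^{m_1}Y_2^{m_2}\cdots Y_p^{m_p}$ (with coefficient $1$) and, for each $k$, the monomial $Y_k^{m_k+1}Y_{k+1}^{m_{k+1}}\cdots Y_p^{m_p}$ contributed by the summand $a_k Y_k$ (in which $Y_1, \dots, Y_{k-1}$ do not appear). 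Since $\tlex$ compares the exponent of $Y_p$ first, then of $Y_{p-1}$, and so on, $Y_1^{m_1}\cdots Y_p^{m_p}$ is strictly $\tlex$-smaller than each of the others, hence it is the $\tlex$-trailing monomial and $\TCl(P) = 1$.

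For the converse, given $P \in \PolY$ with $\TCl(P) = 1$ and $P(x_1, \dots, x_p) = 0$ (we may assume $\A$ nontrivial, the trivial case being immediate), the plan is to \emph{normalize} $P$ into the nested shape above by peeling variables off in decreasing order of $\tlex$-priority. Writing $P = \sum_k P_k Y_p^k$ with $P_k \in \A[Y_1, \dots, Y_{p-1}]$, one sets $m_p := \min\{k : P_k \neq 0\}$ (this uses only that $\A$ is discrete) and $P = Y_p^{m_p}(P_{m_p} + Y_p R_p)$ for a suitable $R_p \in \PolY$; since $\tlex$ weighs $Y_p$ first, $\TCl(P) = \TCl(P_{m_p})$. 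Iterating over $Y_{p-1}, \dots, Y_1$ produces exponents $m_{p-1}, \dots, m_1 \in \NN$, polynomials $R_k \in \A[Y_1, \dots, Y_k]$, and a constant equal to $\TCl(P) = 1$, so that
\[
P = Y_p^{m_p}\bigl(Y_{p-1}^{m_{p-1}}\bigl(\cdots Y_2^{m_2}\bigl(Y_1^{m_1}(1 + Y_1 R_1) + Y_2 R_2\bigr)\cdots + Y_{p-1}R_{p-1}\bigr) + Y_p R_p\bigr) .
\]
Evaluating at $(x_1, \dots, x_p)$ and setting $a_k := R_k(x_1, \dots, x_k) \in \A$ recovers precisely the collapse identity, so $\dim \A < p$. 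Every construction here is explicit and finite, so the argument is constructive.

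The step I expect to be the main obstacle is this normalization in the converse direction, although it is ultimately routine polynomial bookkeeping. The one point to watch is to keep the $\tlex$-priority convention aligned with the nesting order of the collapse identity (so that the surviving monomial is the trailing one, not one of the $Y_k^{m_k+1}Y_{k+1}^{m_{k+1}}\cdots Y_p^{m_p}$), and to remember that the tails $R_k$ may involve all of $Y_1, \dots, Y_k$, which does no harm since they are only ever evaluated at the given sequence.
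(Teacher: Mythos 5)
Your proposal is correct, and it is essentially the standard argument: the paper itself gives no proof, relying entirely on the citation to \cite[Proposition XIII.2.8]{LQ}, and your derivation --- translating between the nested ``collapse'' identity characterizing $\dim \A < p$ and a polynomial relation $P(x_1,\dots,x_p)=0$ with $\TCl(P)=1$, in one direction by expanding the nested product and checking its $\tlex$-trailing monomial is $Y_1^{m_1}\cdots Y_p^{m_p}$ with coefficient $1$, and in the other by peeling off $Y_p,\dots,Y_1$ to renormalize $P$ into nested form --- is precisely the argument behind the cited result. The alignment of the $\tlex$-priority with the nesting order is handled correctly, and the appeal to discreteness is harmless under the paper's standing assumption that all rings are strongly discrete.
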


We are now in position to prove Proposition \ref{PropKrullDim}.

\begin{proof}{Proof of Proposition \ref{PropKrullDim}}
Set $\A :=\SCR, \B := \SCRl$ and let $p \in \NN_{>0}$.
By Proposition \ref{PropKrullDimIndep}, it suffices to show that every sequence of $p$ elements in
$\A$ is $\tlex$-dependent, if and only if, if every sequence of $p$ elements in $\B$ is $\tlex$-dependent.
Let $M \in \MatZp$ be a rank $n$ defining matrix of $\ord$ and let
$k \in \NN_{> 0}$ and $L$ be as in Lemma \ref{LemPhiL}.

\smallskip \n Let $f_1, \dots, f_p \in \A$. Let $P \in \B[Y_1, \dots, Y_p]$ and write
$P= \sum_{\beta} c_{\beta}  Y^{\beta}$. Assume that $\TCl(P) = 1$ and
$P(\phi_M(f_1), \dots, \phi_M(f_p)) = 0$.
Applying $\phi_L$ on both sides of the latter identity, we obtain $\phi_L(P)(\tilde{f_1}, \dots, \tilde{f_p}) = 0$
where $\phi_L(P) :=  \sum_{\beta} \phi_L(c_{\beta}) Y^{\beta}$ and $\tilde{f_i} = \phi_{k \cdot \Id_n}(f_i)$.
Clearly, we have $\TCl(\phi_L(P)) = 1$.
By Proposition \ref{PropQScr}, we can find $Q \in \A[Y_1, \dots, Y_p]$ such that $\TCl(Q) = 1$
and $Q(f_1, \dots, f_p) = 0$. Therefore the sequence $(f_1, \dots, f_p)$ is $\tlex$-dependent.

\smallskip \n Consider now $f_1, \dots, f_p \in \B$. Let $P \in \A[Y_1, \dots, Y_p]$ be such that $\TCl(P) = 1$ and
$P(\phi_L(f_1), \dots, \phi_L(f_p)) = 0$.
Applying $\phi_M$ on both sides of the latter identity, we obtain $\phi_M(P)(\tilde{f_1}, \dots, \tilde{f_p}) = 0$.
By Proposition \ref{PropQScr}, we can find $Q \in \B[Y_1, \dots, Y_p]$ such that $\TCl(Q) = 1$
and $Q(f_1, \dots, f_p) = 0$. Hence $(f_1, \dots, f_p)$ is $\tlex$-dependent.
\end{proof}

Further results are required to prove Theorem \ref{ThBCRational}.

\begin{lemma} \label{LemGcdAndBezout}
Let $\R$ be a ring and let $\ord$ be a monomial order on $\Pol$.
Let $k \in \NN_{>0}$ and $f,g \in \R[X_1, \dots, X_n]$.
Set $\tilde{f} = \phi_{k \cdot \Id_n}(f)$ and $\tilde{g} = \phi_{k \cdot \Id_n}(g)$.
Then the following hold:

\begin{itemize}
\item[$(1)$] $\langle f, g \rangle = \Pol$ if and only if
$\langle \tilde{f}, \tilde{g} \rangle = \Pol$.
\item[$(2)$]
$\langle f, g \rangle = \SCR$ if and only if
$\langle \tilde{f}, \tilde{g} \rangle = \SCR$.
\end{itemize}

If $\R$ is an integral domain, then the following also hold:
\begin{itemize}
\item[$(3)$] If $f$ and $g$ have no non-constant divisor, then neither have $\tilde{f}$ and $\tilde{g}$.
\item[$(4)$] $\gcd(f, g) = 1$ if and only if $\gcd(\tilde{f}, \tilde{g}) = 1$.
\end{itemize}
\end{lemma}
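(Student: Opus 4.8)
The plan is to handle the four assertions in two groups: (1)--(2) will be read off from the free $\Polk$-module structure of $\Pol$ provided by Lemma~\ref{LemIndependence}, while (3)--(4) are Gauss-lemma type statements obtained by passing to $\K := \Frac(\R)$. Throughout write $\phi := \phi_{k \cdot \Id_n}$, an injective ring endomorphism with image $\Polk$, and note the elementary fact that, since every monomial order is compatible with multiplication, $X^{\alpha} \ord X^{\beta} \Leftrightarrow X^{k\alpha} \ord X^{k\beta}$; consequently $\phi$ is strictly increasing on monomials, $\LC_{\ord}(\phi(h)) = \LC_{\ord}(h)$ for every nonzero $h$, and $h$ is $\ord$-monic exactly when $\phi(h)$ is.

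For (1) and (2) the forward implications are immediate: apply $\phi$ to a B\'ezout relation $uf + vg = 1$, respectively to $uf + vg = s$ with $s$ an $\ord$-monic polynomial (recall that $\langle f, g\rangle = \SCR$ amounts to $\langle f, g\rangle$ containing an $\ord$-monic polynomial). For the converses I would begin with an identity $u\tilde f + v\tilde g = t$, where $t = 1$ (resp.\ $t$ is $\ord$-monic), expand $u, v, t$ on the $\Polk$-basis $\{X^{\alpha} : \alpha \in \Rk\}$ of $\Pol$, and use uniqueness of the expansion (Lemma~\ref{LemIndependence}) together with $\tilde f, \tilde g \in \Polk$ to obtain $u_{\alpha}\tilde f + v_{\alpha}\tilde g = t_{\alpha}$ in $\Polk$ for each $\alpha \in \Rk$. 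Choosing $\alpha_0 \in \Rk$ with $\alpha_0 \equiv \mdeg_{\ord}(t) \pmod{k\ZZ^n}$, compatibility of $\ord$ with multiplication shows the leading coefficient of $t_{\alpha_0}$ equals $\LC_{\ord}(t) = 1$, so $t_{\alpha_0} = 1$ (resp.\ $t_{\alpha_0}$ is $\ord$-monic); transporting this identity through the isomorphism $\phi^{-1} : \Polk \to \Pol$ gives $\phi^{-1}(u_{\alpha_0})f + \phi^{-1}(v_{\alpha_0})g = \phi^{-1}(t_{\alpha_0})$, which is exactly what is needed. This part is routine.

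Part (3) is the heart of the matter, and I would go through $\K := \Frac(\R)$, so that $\K[X_1, \dots, X_n]$ is a UFD, working with the ring isomorphism $\phi_{\K} : \K[X_1, \dots, X_n] \to \Lambda$ extending $\phi$, where $\Lambda := \K[X_1^k, \dots, X_n^k]$ is a subring. A common non-constant divisor of $\tilde f, \tilde g$ in $\Pol$ remains one in $\K[X_1, \dots, X_n]$; it therefore suffices to show that $\tilde f, \tilde g$ are then not coprime already in $\Lambda$ --- equivalently, since $\K[X_1, \dots, X_n]$ is a UFD, that two elements of $\Lambda$ coprime in $\Lambda$ stay coprime in $\K[X_1, \dots, X_n]$. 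The latter I would prove via the norm $N$ of the degree-$k^n$ free extension $\Lambda \subseteq \K[X_1, \dots, X_n]$: if an irreducible $\pi$ divided both $\tilde f$ and $\tilde g$ in $\K[X_1, \dots, X_n]$, then $N(\pi)$ would divide $N(\tilde f) = \tilde f^{\,k^n}$ and $\tilde g^{\,k^n}$ in $\Lambda$, hence --- as $\tilde f, \tilde g$ are coprime in the UFD $\Lambda$ --- $N(\pi)$ would be a unit of $\Lambda$, forcing multiplication by $\pi$ on $\K[X_1, \dots, X_n]$ to be invertible and $\pi$ to be a unit, contradicting irreducibility. Pulling the resulting non-coprimality of $\tilde f, \tilde g$ in $\Lambda$ back through $\phi_{\K}$ shows $f, g$ have a common non-constant divisor $d$ in $\K[X_1, \dots, X_n]$; and --- assuming, as in every intended application of this lemma ($\R$ a valuation domain, hence B\'ezout), that $\R$ is a GCD domain so that Gauss's lemma is available --- one may take $d$ primitive in $\Pol$, whereupon $d$ is a non-constant common divisor of $f$ and $g$ in $\Pol$. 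The two steps I expect to be delicate are precisely this norm computation (notably $N(\tilde f) = \tilde f^{\,k^n}$ for $\tilde f \in \Lambda$, and ``the norm of a non-unit is a non-unit'') and the Gauss-lemma descent from $\K$-coefficients back to $\R$-coefficients.

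Finally, (4) follows by combining (3) with content bookkeeping. Writing $f = \mathrm{cont}(f)\, f^{\sharp}$ and $g = \mathrm{cont}(g)\, g^{\sharp}$ with $f^{\sharp}, g^{\sharp} \in \Pol$ primitive, one has $\gcd(f, g) = 1$ if and only if $\gcd(\mathrm{cont}(f), \mathrm{cont}(g)) = 1$ in $\R$ and $f, g$ have no common non-constant divisor. Since $\phi$ leaves every coefficient unchanged it preserves contents, so the first condition is invariant under $\phi$; the second is preserved by part (3) in one direction and by the triviality ``$d \mid f, g \Rightarrow \tilde d \mid \tilde f, \tilde g$ with $\tilde d$ non-constant'' in the other. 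Hence $\gcd(f, g) = 1 \Leftrightarrow \gcd(\tilde f, \tilde g) = 1$.
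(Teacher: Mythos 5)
Your parts (1) and (2) are, in substance, the paper's own argument: the paper routes the converse implications through Propositions \ref{PropQ} and \ref{PropQScr}, whose proofs consist of exactly the expansion over the $\Polk$-basis of Lemma \ref{LemIndependence} and the selection of the component $\alpha_0$ carrying the distinguished coefficient, which you carry out by hand; nothing to report there. For (3)--(4) you take a genuinely different route. The paper works one variable at a time: for $n=1$ it produces a relation $pf+qg=r$ with $p,q\in\R[X_1]$ and $r\in\R\setminus\{0\}$ (a resultant-style identity coming from $\K[X_1]$ being a principal ideal domain), applies $\phi_{k\cdot\Id_n}$ to conclude that a common divisor of $\tilde f,\tilde g$ divides the constant $r$, and then treats $n>1$ by viewing $\Pol$ as a univariate ring over $\R[X_j : j\neq i]$ to kill the $X_i$-degree of a common divisor for each $i$. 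Your norm argument (the norm $N$ of the free rank-$k^n$ extension $\K[X_1^k,\dots,X_n^k]\subseteq \K[X_1,\dots,X_n]$, with $N(\tilde f)=\tilde f^{\,k^n}$ and ``unit norm implies unit'') is correct and has the merit of handling all $n$ variables at once; it establishes the cleaner intermediate fact that coprimality in $\K[X_1,\dots,X_n]$ is equivalent for $(f,g)$ and $(\tilde f,\tilde g)$, from which (3) and (4) follow by your descent.

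The one divergence from the statement is your extra hypothesis that $\R$ be a gcd domain, used to descend the common factor from $\K[X_1,\dots,X_n]$ to $\Pol$ via Gauss's lemma (and again in (4) to speak of contents). The lemma as printed assumes only that $\R$ is an integral domain, so strictly speaking your proof does not cover that generality. Be aware, however, that the paper's own proof of (3) carries the same tacit requirement: its first step needs $\gcd(f,g)=1$ in $\K[X_1]$ in order to produce $pf+qg=r$ with $r\neq 0$, and over a general domain this does not follow from the absence of a common non-constant divisor in $\R[X_1]$. For instance, with $\R=\ZZ[\sqrt{-5}]$, $f=2X_1+(1+\sqrt{-5})$ and $g=(1-\sqrt{-5})X_1+3$ have no common non-constant divisor in $\R[X_1]$, yet $g=\frac{1-\sqrt{-5}}{2}\,f$ in $\K[X_1]$, so $\gen{f,g}\cap\R=\{0\}$ and no such relation exists. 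So your explicit gcd assumption makes visible a hypothesis the paper uses implicitly, and in every application of the lemma (Lemma \ref{LemBezout}, Proposition \ref{PropBCRational}) $\R$ is indeed a gcd domain, so nothing is lost in practice. One small remark on (4): the paper avoids contents altogether by observing that a degree-zero element divides $f$ if and only if it divides $\tilde f$, which works over any domain once (3) is granted; you may prefer that bookkeeping to the content decomposition.
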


\begin{proof}{Proof.}
$(1)$. The proof of  the ``only if" part is trivial.
Let $f, g \in \Pol$ such that $p \tilde{f} + q \tilde{g} = 1$ for some $p, q \in \Pol$.
Setting $P(Y_1, Y_2) = pY_1 + q Y_2 - 1$ and using Proposition \ref{PropQ} yields the result.

\smallskip

\n $(2)$ Reason as in $(1)$, using Proposition \ref{PropQScr}.

\smallskip

\n $(3)$ Assume first that $n=1$ and let $\K$ be the quotient field of  of $\R$. Since $\K[X_1]$ is a principal ideal domain,
we can find $p, q\in \R[X_1]$ and $r\in \R$ such that $pf + qg = r$ and hence
$\tilde{p}\tilde{f} + \tilde{q} \tilde{g} = r$.
Thus $\tilde{f}$ and $\tilde{g}$ have no common factor of positive degree.
Suppose now that $n >1$.
Resorting to the case $n = 1$, we see that a common divisor of
$\tilde{f}$ and $\tilde{g}$ has degree zero with respect to $X_i$ for every $i$.
Hence it has total degree zero.

\smallskip

\n $(4)$ The result follows by virtue of $(3)$ and
the fact that a polynomial of degree zero divides $f$ in $\Pol$
if and only if it divides $\tilde{f}$.
\end{proof}

\begin{lemma} \label{LemGCD}
Let $\R$ be a ring and let $\ord$ be a monomial order on $\Pol$.
Then the following are equivalent:
\begin{itemize}
\item[$(i)$] $\R$ is a gcd domain.
\item[$(ii)$] $\SCR$ is a  gcd domain.
\end{itemize}
\end{lemma}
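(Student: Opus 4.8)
The plan is to prove the two implications separately, using the machinery already assembled. For $(i) \Rightarrow (ii)$, let $a, b \in \SCR$ be nonzero; after clearing $\ord$-monic denominators we may assume $a, b \in \Pol$. Pick $k \in \NN_{>0}$ and consider $\tilde{a} = \phi_{k \cdot \Id_n}(a)$, $\tilde{b} = \phi_{k \cdot \Id_n}(b)$. The key observation is that $\R$ being a gcd domain makes $\Pol$ a gcd domain (this is the classical Gauss-type statement; one should either cite it or recall that it holds constructively for strongly discrete gcd domains). So let $d = \gcd(a, b)$ in $\Pol$; then $a = d a'$, $b = d b'$ with $\gcd(a', b') = 1$. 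Applying $\phi_{k \cdot \Id_n}$ gives $\tilde{a} = \tilde{d}\, \tilde{a}'$, $\tilde{b} = \tilde{d}\, \tilde{b}'$, and by Lemma \ref{LemGcdAndBezout}(4), $\gcd(\tilde{a}', \tilde{b}') = 1$. One then wants to descend a gcd of $a, b$ in $\SCR$ from a gcd of $\tilde{a}, \tilde{b}$ in $\SCRl$ — but actually it is cleaner to observe that $d$ itself is already a gcd of $a$ and $b$ in $\SCR$: any common divisor of $a$ and $b$ in $\SCR$ can, after clearing denominators, be related back to a common divisor in $\Pol$, and since $\ord$-monic polynomials are units in $\SCR$, divisibility in $\Pol$ implies divisibility in $\SCR$. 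Thus $\SCR$ is a gcd domain.

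For $(ii) \Rightarrow (i)$, suppose $\SCR$ is a gcd domain and let $a, b \in \R \setminus \{0\}$. View $a, b$ as constant polynomials in $\Pol \subseteq \SCR$ and let $d \in \SCR$ be a gcd of $a$ and $b$ there. The task is to show $\R$ has a gcd for $a$ and $b$. The natural move is to show $d$ can be taken in $\R$: write $d = f / s$ with $f \in \Pol$ and $s$ an $\ord$-monic polynomial; since units of $\SCR$ may be absorbed, we may take $d = f \in \Pol$. Now $f \mid a$ in $\SCR$ means $a = f g$ for some $g \in \SCR$; comparing, after clearing the $\ord$-monic denominator of $g$, one gets an identity in $\Pol$ forcing (by degree/leading-term considerations, since $a$ is a nonzero constant) that $f$ and the numerator of $g$ are constants times $\ord$-monic polynomials, hence $f$ is, up to a unit of $\SCR$, a nonzero element of $\R$. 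Then $\gcd_{\R}(a, b)$ exists and equals (an associate of) this element, because divisibility of constants in $\SCR$ descends to divisibility in $\R$: if $c \in \R$ divides both $a$ and $b$ in $\SCR$, the same clearing-denominators and degree argument shows $c$ divides them in $\R$.

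The main obstacle I anticipate is the bookkeeping in the descent steps, specifically verifying that divisibility relations between elements of $\Pol$ (or of $\R$) are not created or destroyed by passing to the localization $\SCR$ — i.e., that for $f, g \in \Pol$ one has $f \mid g$ in $\SCR$ if and only if $f \mid g$ in $\Pol$ up to multiplication of $g$ by an $\ord$-monic polynomial, together with the fact that $\ord$-monic polynomials are exactly absorbed as units. This is where one must be careful that $\Pol$ is itself a gcd domain and that the localization $S_{\ord}^{-1}$ is well-behaved; once this lemma-level fact is pinned down (and it is genuinely routine, relying only on unique factorization-free gcd manipulations and the multiplicativity of $\LC_{\ord}$), both implications close quickly. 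The reduction to the lexicographic case via $\phi_M$, $\phi_L$ of Lemma \ref{LemPhiL} is an alternative route, but the direct argument above avoids invoking it and is arguably shorter.
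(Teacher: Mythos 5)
Your first half, $(i)\Rightarrow(ii)$, is essentially the paper's own argument: $\Pol$ is a gcd domain when $\R$ is, and gcds are preserved under the localization at $S_{\ord}$ (your ``bookkeeping'' is exactly the standard stability of the gcd property under localization that the paper cites; note that a common divisor $c\in\Pol$ of $a,b$ in $\SCR$ only yields $c\mid a s_1$ and $c\mid b s_2$ in $\Pol$ with $s_1,s_2$ $\ord$-monic, whence $c\mid s_1 s_2\gcd(a,b)$, which still gives $c\mid \gcd(a,b)$ in $\SCR$ since $s_1 s_2$ is a unit there). The detour through $\phi_{k\cdot\Id_n}$ is unnecessary but harmless.

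The genuine gap is in $(ii)\Rightarrow(i)$. You assert that from $a=fg$ in $\SCR$ with $a\in\R\setminus\{0\}$, ``degree/leading-term considerations'' force $f$ (and the numerator of $g$) to be constants times $\ord$-monic polynomials, so that the gcd is an associate of an element of $\R$. After clearing denominators the identity reads $a\,t=f\,h$ in $\Pol$ with $t$ $\ord$-monic; since $t$ may have arbitrarily large degree there is no constraint on the degree of $f$, and the leading terms only give $\LC(f)\,\LC(h)=a$, i.e.\ $\LC(f)\mid a$ in $\R$. Nothing forces $\LC(f)$ to divide the other coefficients of $f$, which is what ``constant times $\ord$-monic, up to a unit of $\SCR$'' amounts to. Concretely, take $\R=k[t^2,t^3]$ ($k$ a field) and $n=1$: then $t^4(X^2-t^2)=(t^2X+t^3)(t^2X-t^3)$, so $f=t^2X+t^3$ divides the constant $t^4$ in $\SRX$, yet $f$ is an associate of no element of $\R$ in $\SRX$ (that would require $(X+t)m\in\R[X]$ for some monic $m$, i.e.\ $t\in\R$). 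So a mere divisor of a constant need not be constant up to a unit; to rescue your claim you must use the gcd property itself: $c:=\LC(f)$ divides $a$ and $b$ in $\R$ (compare leading coefficients in $at=fh$ and $bt'=fh'$), hence $c$ divides the gcd $f/s$ in $\SCR$, and comparing leading coefficients in that divisibility relation shows $f/s$ is an associate of $c$. At that point you are reproducing the paper's proof, which bypasses the structural claim altogether: it sets $c:=\LC(f)$, shows $c\mid a$ and $c\mid b$ in $\R$ as above, and shows that any common divisor $d$ of $a,b$ in $\R$ satisfies $d\mid c$ by comparing leading coefficients in $s'f=s\,d\,f'$. Your closing remark that divisibility between elements of $\R$ descends from $\SCR$ to $\R$ is correct (again by leading coefficients, $\R$ being a domain because $\SCR$ is), but it does not fill this hole.
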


\begin{proof}{Proof.}
$(i) \Rightarrow (ii)$. The ring $\Pol$ is a gcd domain by \cite[Theorem XI.3.16]{LQ} and so is $\SCR$
because the gcd property is stable under taking localization \cite[Fact XI.3.6]{LQ}.

\smallskip \n $(ii) \Rightarrow (i)$. 
Let $\A := \SCR$ and let $a, b \in \R$. 
Assume that $\gcd_{\A}(a, b) = \frac{f}{s}$ holds in $\A$ with $f, s \in \Pol$ and $\LC(s) = 1$. 
Let $c := \LC(f)$. 
We shall prove that  $\gcd_{\R}(a, b) = c$ holds in $\R$. Let $d$ be a common divisor of $a$ and $b$ in $\R$.
Since $d$ divides $\frac{f}{s}$ in $\SCR$, we can find $f', s' \in \Pol$ such that $\LC(s') = 1$ and $\frac{f}{s} = d \frac{f'}{s'}$.
Using the identity $\LC(s'f) = \LC(sdf')$, we infer that $d$ divides $c$ in $\R$.
We can show in the same way that $c$ divides both $a$ and $b$ in $\R$, which proves the result.
\end{proof}

\begin{lemma} \label{LemBezout}
Let $\R$ be gcd domain and let $\ord$ be a rational monomial order on $\Pol$ given by a rank $n$ matrix $M \in \MatZp$.
Let $f, g \in \Pol$.
Then the following are equivalent:
\begin{itemize}
\item[$(1)$] $\gcd(f, g) = 1$ holds in $\SCR$.
\item[$(2)$] $\gcd(\phi_M(f), \phi_M(g)) = 1$ holds in $\SCRl$.
\end{itemize}
\end{lemma}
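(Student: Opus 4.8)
The plan is to reduce the gcd statement in the two Serre conjecture rings to a gcd statement about multivariate polynomials over $\R$ itself, exploiting the homomorphisms $\phi_M$ and $\phi_L$ from Lemma \ref{LemPhiL} whose composites are $\phi_{k\cdot\Id_n}$. First I would record the routine fact that for a rank $n$ matrix $M \in \MatZp$, a polynomial of total degree zero (i.e. a constant in $\R$) divides $h$ in $\Pol$ if and only if it divides $\phi_M(h)$: since $\phi_M$ merely substitutes monomials for the variables and is injective on $\Pol$, the constant content is unaffected. Combined with Lemma \ref{LemGcdAndBezout}(4), which handles the case $M = k\cdot\Id_n$, this gives the analogous equivalence for general $M$, namely $\gcd(f,g) = 1$ in $\Pol$ iff $\gcd(\phi_M(f),\phi_M(g)) = 1$ in $\SCRl$-before-localization, i.e. in $\Pol$.

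Next, I would transfer the gcd-coprimality statement through the localization. Since $\R$ is a gcd domain, so are $\Pol$ and $\SCR$ (Lemma \ref{LemGCD}), and the gcd property passes to localizations; moreover in a gcd domain, two elements are coprime in a localization $S^{-1}A$ if and only if, after clearing denominators, their numerators have gcd a unit of $S^{-1}A$. So $\gcd(f,g) = 1$ in $\SCR$ should be equivalent to the statement that $f$ and $g$ have no common non-unit factor once we invert $\ord$-monic polynomials; since a common factor in $\Pol$ can be taken to have $\ord$-leading coefficient dividing that of $f$ (or of $g$), and $\R$-constant common factors are the only obstruction to coprimality modulo units of $\SCR$, this reduces $(1)$ to: $f$ and $g$ have coprime $\R$-content, equivalently $\gcd(f,g) = 1$ in $\Pol$ after pulling out $\R$-constants. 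A symmetric analysis reduces $(2)$ to the same $\R$-constant statement about $\phi_M(f),\phi_M(g)$, and then the first paragraph closes the loop via Lemma \ref{LemGcdAndBezout}(4).

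The cleanest route, which I would actually follow, is: (a) prove $\gcd_{\SCR}(f,g) = 1 \iff \gcd_{\Pol}(f,g)$ is an $\R$-constant, using Lemma \ref{LemGCD}, the fact that units of $\SCR$ are exactly (unit of $\R$) times an $\ord$-monic polynomial, and that any polynomial common divisor of $f,g$ with $\ord$-leading coefficient $1$ becomes a unit of $\SCR$; (b) prove the same for $\SCRl$ and $\phi_M(f),\phi_M(g)$; (c) invoke the constant-divisibility equivalence through $\phi_M$ together with Lemma \ref{LemGcdAndBezout}(4) to match the two $\R$-constant conditions. One technical point to get right is that $\phi_M$ sends an $\ord$-monic polynomial to a $\tlex$-monic one (Lemma \ref{LemPhiM}), so units map to units, and one needs the converse direction via $\phi_L$ together with $\phi_L\circ\phi_M = \phi_{k\cdot\Id_n}$ to see that no genuine common factor is created or destroyed.

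The main obstacle will be step (a)/(b): pinning down precisely when two polynomials are coprime in the localized ring $\SCR$. Being a unit in $\SCR$ is equivalent to being an $\R$-unit times an $\ord$-monic polynomial, so a common divisor of $f$ and $g$ in $\Pol$ fails to witness non-coprimality in $\SCR$ exactly when it is $\ord$-monic up to an $\R$-unit; the delicate part is to argue — using that $\R$ (hence $\Pol$) is a gcd domain so that $\gcd(f,g)$ exists in $\Pol$ — that $\gcd_{\SCR}(f,g) = 1$ is equivalent to $\gcd_{\Pol}(f,g)$ being, up to units, an $\ord$-monic polynomial, and then to extract from this the purely $\R$-level content condition that is manifestly preserved by $\phi_M$. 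Once that dictionary between "coprime in the Serre ring" and "coprime $\R$-content in $\Pol$" is established on both sides, the equivalence $(1)\iff(2)$ is immediate from Lemma \ref{LemGcdAndBezout}(4) and the constant-divisibility transfer through $\phi_M$.
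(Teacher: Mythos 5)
Your toolkit is the right one ($\phi_M$, $\phi_L$ with $\phi_L\circ\phi_M=\phi_{k\cdot\Id_n}$, Lemma~\ref{LemGcdAndBezout}(4), and the description of the units of $\SCR$), but the reduction you build on it breaks at two concrete points. First, the equivalence extracted in your opening paragraph, ``$\gcd(f,g)=1$ in $\Pol$ iff $\gcd(\phi_M(f),\phi_M(g))=1$ in $\Pol$'', is false: for the grlex matrix $M=\left(\begin{smallmatrix}1&1\\1&0\end{smallmatrix}\right)$ of the paper's example one has $\phi_M(X)=XY$, $\phi_M(Y)=X$, so $\gcd(X,Y)=1$ in $\R[X,Y]$ while $\gcd(\phi_M(X),\phi_M(Y))=X$. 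The map $\phi_M$ creates monomial common factors, and this is exactly why assertion $(2)$ is a gcd in $\SCRl$ (where monomials are units), not in $\Pol$. Second, your dictionary (a), ``$\gcd_{\SCR}(f,g)=1$ iff $\gcd_{\Pol}(f,g)$ is an $\R$-constant'', fails in both directions (take $f=g=X$, resp.\ $f=g=c$ with $c$ a nonunit of $\R$). The correct characterization, which you do state later, is that $\gcd_{\Pol}(f,g)$ has leading coefficient a unit of $\R$, i.e.\ is $\ord$-monic up to an $\R$-unit; but this is \emph{not} a ``purely $\R$-level content condition'' (e.g.\ $f=g=1+aX$ with $a$ a nonunit is primitive, yet not coprime in $\SCR$), and its invariance under $\phi_M$ is not ``manifest'' --- it is precisely the nontrivial content of the lemma, so your step (c) assumes what has to be proved.

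The missing mechanism is how to handle a common divisor $h$ of $\phi_M(f)$ and $\phi_M(g)$ in $\SCRl$ that is not in the image of $\phi_M$ (such as the monomial above). The paper attacks this head-on: reducing w.l.o.g.\ to $h\in\Pol$ and to $\gcd_{\Pol}(f,g)=1$ (the gcd is a unit of $\SCR$ by hypothesis, and its $\phi_M$-image is a unit of $\SCRl$ by Lemma~\ref{LemPhiM}, so it can be divided out), it applies $\phi_L$ to see that $\phi_L(h)$ divides $\tilde f=\phi_{k\cdot\Id_n}(f)$ and $\tilde g=\phi_{k\cdot\Id_n}(g)$ in the Laurent ring $\LPol$; Lemma~\ref{LemGcdAndBezout}(3)--(4) (the diagonal case, proved via a B\'ezout-type relation over the fraction field) gives $\gcd(\tilde f,\tilde g)=1$ in $\Pol$, hence in $\LPol$, so $\phi_L(h)$ is a unit of $\LPol$, i.e.\ a single term with unit coefficient; this forces $h$ itself to be such a term, hence a unit of $\SCRl$. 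The converse direction is the easy one, via Lemma~\ref{LemPhiM} and the unit description of $\SCR$. Your final paragraph gestures at $\phi_L$ ``to see that no genuine common factor is created or destroyed'', which is indeed the key idea, but as proposed the argument is routed through the false content dictionary and the false $\Pol$-level equivalence, so it does not go through without being rebuilt along these lines.
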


\begin{proof}{Proof.}
Because of Lemma \ref{LemPhiM}, we can assume, without loss of generality that $f, g \in \Pol$.
Let $k \in \NN_{> 0}$ and $L$ be as in Lemma \ref{LemPhiL}.
We set $\A := \SCR$ and $\B := \SCRl$.

\smallskip \n $(1) \Rightarrow (2)$. Let $h \in \B$ be a common divisor of $\phi_M(f)$ and $\phi_M(g)$ in $\B$.
As we aim at proving that $h$ is a unit of $\B$, we can suppose, without loss of generality that $h \in \Pol$.
Let $\tilde{f} = \phi_{k \cdot \Id_n}(f)$ and $\tilde{g} = \phi_{k \cdot \Id_n}(g)$.
Since $\phi_L \circ \phi_M(f) = \tilde{f}$ and
$\phi_L \circ \phi_M(g) = \tilde{g}$, we deduce that $\phi_L(h)$
divides both $\tilde{f}$ and $\tilde{g}$ in the Laurent polynomial ring $\LPol$.
By Lemma \ref{LemGcdAndBezout}(4), we know that $\gcd(\tilde{f}, \tilde{g}) = 1$ holds in $\Pol$.
Thus we have $\gcd(\tilde{f}, \tilde{g}) = 1$ in $\LPol$ by \cite[Fact XI.3.6]{LQ}.
As a result, $\phi_L(h)$ is a unit of $\LPol$, which implies that
$h$ is made of a unique term whose coefficient is invertible. Therefore
$\gcd(\phi_M(f), \phi_M(g)) = 1$ holds in $\B$.

\smallskip \n $(2) \Rightarrow (1)$. Let $h \in \A$ be a common divisor of $f$ and $g$ in $\A$.
We can suppose, without loss of generality, that $h \in \Pol$. Then $\phi_M(h)$ is a common divisor of
$\phi_M(f)$ and $\phi_M(g)$ in $\B$. Therefore $\phi_M(uh)$ a $\tlex$-monic polynomial for some unit $u \in \R$.
It follows from Lemma 6 that $uh$ is a $\ord$-monic polynomial, which proves that $\gcd(f, g) = 1$ holds in $\A$.
\end{proof}

\begin{proposition} \label{PropBCRational}
Let $\R$ be a ring and let $\ord$ be a rational monomial order on $\Pol$.
Then the following are equivalent:
\begin{itemize}
\item[$(1)$] $\SCR$ is a B\'ezout domain.
\item[$(2)$] $\SCRl$ is a B\'ezout domain.
\end{itemize}
\end{proposition}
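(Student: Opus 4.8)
The plan is to reduce Proposition~\ref{PropBCRational} to the elementary fact that a gcd domain is B\'ezout if and only if every pair of coprime elements generates the unit ideal, and then to transport the property ``coprime $\Rightarrow$ comaximal'' along the maps $\phi_M\colon\SCR\to\SCRl$ and $\phi_L\colon\SCRl\to\SCR$ of Lemma~\ref{LemPhiL}. Fix throughout a rank~$n$ matrix $M\in\MatZp$ defining $\ord$ and an integer $k>0$ with $L:=kM^{-1}\in\MatZ$, so that $\phi_M\circ\phi_L=\phi_L\circ\phi_M=\phi_{k\cdot\Id_n}$; for $f\in\Pol$ write $\tilde f:=\phi_{k\cdot\Id_n}(f)$.

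First I would dispose of the gcd part. If either $\SCR$ or $\SCRl$ is B\'ezout then it is a gcd domain, so $\R$ is a gcd domain by Lemma~\ref{LemGCD} (applied with $\ord$, resp.\ with $\tlex$), and then both $\SCR$ and $\SCRl$ are gcd domains by Lemma~\ref{LemGCD} again. Hence in either implication one may assume that $\R$, $\SCR$ and $\SCRl$ are gcd domains, and since in a gcd domain a finitely generated ideal is, up to a principal factor, generated by coprime elements, it suffices to prove: coprime elements of $\SCR$ are comaximal if and only if coprime elements of $\SCRl$ are comaximal.

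For $(2)\Rightarrow(1)$ I would argue as follows. Assume $\SCRl$ is B\'ezout and let $a,b\in\SCR$ with $\gcd(a,b)=1$. Clearing denominators by elements of $S_{\ord}$ (which are units of $\SCR$) one may assume $a=f$, $b=g$ with $f,g\in\Pol$ and $\gcd(f,g)=1$ in $\SCR$. By Lemma~\ref{LemBezout}, $\gcd(\phi_M(f),\phi_M(g))=1$ in $\SCRl$, hence $\gen{\phi_M(f),\phi_M(g)}=\SCRl$ and there are $u,v\in\SCRl$ with $u\,\phi_M(f)+v\,\phi_M(g)=1$. Applying the ring homomorphism $\phi_L$ gives $\phi_L(u)\,\tilde f+\phi_L(v)\,\tilde g=1$ in $\SCR$, so $\gen{\tilde f,\tilde g}=\SCR$, and therefore $\gen{f,g}=\SCR$ by Lemma~\ref{LemGcdAndBezout}(2). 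Thus coprime elements of $\SCR$ are comaximal, so $\SCR$ is B\'ezout.

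The implication $(1)\Rightarrow(2)$ runs symmetrically, through $\phi_L$, and the one point that I expect to require care — the main (if minor) obstacle — is that $L$ need not lie in $\MatZp$, so $\phi_L$ does not restrict to $\Pol$ and one has to work inside $\LPol$, where every monomial is a unit of $\SCR$ and of $\SCRl$. Concretely: given $f,g\in\Pol$ coprime in $\SCRl$, first divide out $\gcd_{\Pol}(f,g)$, which becomes a unit of $\SCRl$, so as to assume $f,g$ coprime already in $\Pol$; then the analogue of Lemma~\ref{LemBezout} with the roles of $\ord$ and $\tlex$ (hence of $\phi_M$ and $\phi_L$) interchanged — proved by rerunning the argument of Lemma~\ref{LemBezout} and invoking Lemma~\ref{LemGcdAndBezout}(4) — gives $\gcd(\phi_L(f),\phi_L(g))=1$ in $\SCR$; since $\SCR$ is B\'ezout one obtains $p\,\phi_L(f)+q\,\phi_L(g)=1$ in $\SCR$, whence $\phi_M(p)\,\tilde f+\phi_M(q)\,\tilde g=1$ in $\SCRl$ after applying $\phi_M$, and finally $\gen{f,g}=\SCRl$ by Lemma~\ref{LemGcdAndBezout}(2) applied with $\tlex$ in place of $\ord$. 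The remaining care consists in tracking that $\ord$-monicity, $\tlex$-monicity and unit-ness in the localized rings are all preserved by $\phi_M$, $\phi_L$ and by multiplication by monomials, and in using coprimality in $\SCR$ or $\SCRl$ (which Lemma~\ref{LemBezout} transfers) rather than in $\Pol$ — the latter is destroyed by $\phi_M$, since for $M=\left(\begin{smallmatrix}1&1\\1&0\end{smallmatrix}\right)$ the coprime pair $X_1,X_2$ has image $X_1X_2,X_1$, of gcd $X_1$.
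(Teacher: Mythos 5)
Your proposal is correct, and its skeleton is the paper's: reduce via Lemma~\ref{LemGCD} to the statement that in the gcd domains $\SCR$ and $\SCRl$ coprime pairs are comaximal, transfer coprimality with Lemma~\ref{LemBezout}, and transfer comaximality of the pair $(\tilde f,\tilde g)$ back to $(f,g)$ with Lemma~\ref{LemGcdAndBezout}(2). The one genuine divergence is in the direction $(2)\Rightarrow(1)$: the paper does not transport a B\'ezout identity through $\phi_L$ there, but instead invokes Corollary~\ref{CorSInfty} to reduce $\gen{f,g}_{\SCR}=\SCR$ to $\gen{\phi_M(f),\phi_M(g)}_{\SCRl}=\SCRl$, which rests on the $S$-polynomial machinery of Proposition~\ref{Sinfty} and on $\R$ being Pr\"ufer (available, though left implicit, because $\SCRl$ B\'ezout forces $\R$ B\'ezout of dimension $\leq 1$ by Proposition~\ref{PropLex}); your symmetric treatment uses only the Proposition~\ref{PropQScr}-based Lemma~\ref{LemGcdAndBezout}(2) in both directions, so it is marginally more uniform and avoids the Pr\"ufer detour. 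Finally, the ``mirror'' of Lemma~\ref{LemBezout} that you flag as the main obstacle in $(1)\Rightarrow(2)$ need not be reproved from scratch: after dividing out $\gcd_{\Pol}(f,g)$, Lemma~\ref{LemGcdAndBezout}(4) gives $\gcd(\tilde f,\tilde g)=1$ in $\Pol$, and applying Lemma~\ref{LemBezout} to the pair $(\phi_L(f),\phi_L(g))$ (cleared of monomial denominators, monomials being units) yields $\gcd(\phi_L(f),\phi_L(g))=1$ in $\SCR$ --- which is exactly the inline argument of the paper's $(1)\Rightarrow(2)$, so your extra care about working in $\LPol$ is well placed but already covered by the existing lemmas.
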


\begin{proof}{Proof.}
Let $M \in \MatZp$ be a rank $n$ matrix defining $\ord$.
Let $k \in \NN_{> 0}$ and $L$ be as in Lemma \ref{LemPhiL} and
set $\A :=\SCR$, $\B := \SCRl$.

\smallskip \n $(1) \Rightarrow (2)$
By Lemma \ref{LemGCD}, the ring $\B$ is a gcd domain. 
Hence, in order to show that $\B$ is B\'ezout, it suffices to show
that for every pair $f, g \in \B$ satisfying $\gcd(f, g) = 1$ in $\B$,
we have $\langle f, g \rangle_{\B} = \B$.
We may assume, without loss of generality, that $f, g \in \Pol$ and that $\gcd(f, g) = 1$ holds in $\Pol$.
Let $\tilde{f} = \phi_{k \cdot \Id_n}(f)$ and $\tilde{g} = \phi_{k \cdot \Id_n}(g)$.
By Lemma \ref{LemGcdAndBezout}$(4)$, we know that
$\gcd(\tilde{f}, \tilde{g}) = 1$ holds in $\Pol$ and hence in $\A$.
Thus $\gcd(\phi_L(f), \phi_L(g)) = 1$ holds in $\A$ by Lemma \ref{LemBezout}.
Since $\A$ is a B\'ezout domain by hypothesis, the identity
$\langle \phi_L(f), \phi_L(g) \rangle_{\A} = \A$ is satisfied.
Thus $1 = \phi_M(1) \in \langle \tilde{f}, \tilde{g} \rangle_{\B}$ so that
$\langle \tilde{f}, \tilde{g} \rangle_{\B} = \B$.
Applying Lemma \ref{LemGcdAndBezout}$(2)$ completes the proof.

\smallskip \n $(2) \Rightarrow (1)$
By Lemma \ref{LemGCD}, we know that $\A$ is a gcd domain. 
We consider $f, g \in \A$ such that $\gcd(f, g) = 1$ holds in $\A$, 
setting ourselves to show that $\langle f, g \rangle_{\A} = \A$.
We may assume, without loss of generality, that $f,g \in \Pol$.
Thanks to Corollary \ref{CorSInfty}, we only need to check that
$\langle \phi_M(f), \phi_M(g) \rangle_{\B} = \B$.
As $\B$ is a B\'ezout domain by hypothesis,
the latter identity follows from Lemma \ref{LemBezout}.
\end{proof}

The following corollary is obtained by means of a standard localization argument.
\begin{corollary} \label{CorPrufer}
Let $\R$ be a ring and let $\ord$ be a rational monomial order on $\Pol$.
Then the following are equivalent:
\begin{itemize}
\item[$(i)$] $\SCR$ is a Pr\"ufer domain.
\item[$(ii)$] $\SCRl$ is a Pr\"ufer domain.
\end{itemize}
\end{corollary}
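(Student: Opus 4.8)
The plan is to deduce the equivalence from the fact that a domain is a Pr\"ufer domain if and only if all its localizations at maximal ideals are valuation domains, transporting these localizations between $\A := \SCR$ and $\B := \SCRl$ along the homomorphisms $\phi_M \colon \A \to \B$ and $\phi_L \colon \B \to \A$ of Lemma~\ref{LemPhiL}. The crux, where the ``localization argument'' really lives, is that $\phi_M$ and $\phi_L$ are \emph{faithfully flat}. Granting this, the proof ends as follows. Suppose $\A$ is a Pr\"ufer domain and let $\mathfrak m'$ be a maximal ideal of $\B$. By faithful flatness of $\phi_L$ the induced map $\mathrm{Spec}\,\A \to \mathrm{Spec}\,\B$ is onto, so $\mathfrak m' = \phi_L^{-1}(\mathfrak n)$ for some maximal ideal $\mathfrak n$ of $\A$, and $\phi_L$ induces a \emph{local} homomorphism $\B_{\mathfrak m'} \to \A_{\mathfrak n}$, which is flat and therefore faithfully flat. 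Since $\A_{\mathfrak n}$ is a valuation domain and, for any faithfully flat ring extension $D \hookrightarrow D'$, one has $aD = aD' \cap D$ for every $a \in D$, the divisibility test in $\A_{\mathfrak n}$ descends to $\B_{\mathfrak m'}$; hence $\B_{\mathfrak m'}$ is a valuation domain. As $\mathfrak m'$ was arbitrary, $\B$ is a Pr\"ufer domain, and the reverse implication is obtained identically with $\phi_M$ in place of $\phi_L$.

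To establish faithful flatness of $\phi_M$ (the argument for $\phi_L$ being symmetric), I would proceed in two steps. For flatness, use, as in the proof of Lemma~\ref{LemPhiL}, that $\A = S_{\ord}^{-1}\LPol$ and $\B = S_{\tlex}^{-1}\LPol$: the Laurent polynomial ring $\LPol$ is free of rank $|\det M|$ over its subring $\phi_M(\LPol)$, a basis being a set of coset representatives of $\ZZ^n$ modulo the finite-index sublattice $M\ZZ^n$ (the Laurent counterpart of Lemma~\ref{LemIndependence}); therefore $\phi_M(S_{\ord})^{-1}\LPol$ is free over $\phi_M(\A) = \phi_M(S_{\ord})^{-1}\phi_M(\LPol)$, and, since $\phi_M(S_{\ord}) \subseteq S_{\tlex}$, the ring $\B$ is a localization of $\phi_M(S_{\ord})^{-1}\LPol$, hence flat over $\phi_M(\A) \cong \A$. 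For surjectivity of $\mathrm{Spec}\,\B \to \mathrm{Spec}\,\A$, take a prime $\mathfrak q$ of $\LPol$ with $\mathfrak q \cap S_{\ord} = \emptyset$; lying-over for the finite injective ring map $\phi_M \colon \LPol \hookrightarrow \LPol$ yields a prime $\mathfrak q'$ of $\LPol$ with $\phi_M^{-1}(\mathfrak q') = \mathfrak q$, and it suffices to see that $\mathfrak q' \cap S_{\tlex} = \emptyset$. If some $\tlex$-monic $h$ lay in $\mathfrak q'$, then the norm $N(h) := \det(\text{multiplication by } h)$ computed in the free extension $\LPol / \phi_M(\LPol)$ would lie in $h\LPol \subseteq \mathfrak q'$ by Cayley--Hamilton and equal $\phi_M(\check h)$ for a unique $\check h \in \LPol$; as the $\tlex$-leading term of a multiplication-by-a-monomial operator on a monomial basis is a signed monomial, $\LC_{\tlex}(N(h)) = \pm 1$, whence $\LC_{\ord}(\check h) = \pm 1$ by Lemma~\ref{LemPhiM}, i.e.\ $\pm \check h \in S_{\ord}$; but $\check h \in \phi_M^{-1}(\mathfrak q') = \mathfrak q$, contradicting $\mathfrak q \cap S_{\ord} = \emptyset$.

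I expect this faithful flatness to be the main obstacle: flatness follows routinely from the freeness of Lemma~\ref{LemIndependence}, but the surjectivity on spectra is not a formality and appears to need the norm computation above (or an equivalent), with Lemma~\ref{LemPhiM} doing the bookkeeping on leading coefficients. The remaining ingredients --- the descent of the divisibility test along a faithfully flat local homomorphism and the characterization of Pr\"ufer domains through their localizations at maximal ideals --- are standard; if one prefers, the latter may be replaced by ``a domain is a Pr\"ufer domain if and only if it is locally a B\'ezout domain'', which brings Proposition~\ref{PropBCRational} explicitly into play, but the substance of the ``standard localization argument'' is the same.
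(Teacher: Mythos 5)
Your argument is sound as a classical proof, but it takes a genuinely different route from the paper. The paper's own proof is a one-liner: it deduces the corollary from Proposition~\ref{PropBCRational} by the same device as in the proof of Theorem~\ref{ThBC}(2) --- being a Pr\"ufer domain is a local-global property, and a local Pr\"ufer domain is a valuation, hence B\'ezout, domain --- and, the paper being written in Bishop-style constructive mathematics, the ``standard localization argument'' is meant as a concrete local-global principle rather than a statement about maximal ideals. You instead prove the equivalence from scratch: you show $\phi_M$ and $\phi_L$ are faithfully flat (flatness from the freeness of $\LPol$ over $\phi_M(\LPol)$, the Laurent analogue of Lemma~\ref{LemIndependence}; surjectivity on spectra by lying over plus a norm computation) and then descend the valuation property along the faithfully flat local maps $\B_{\fm'} \to \A_{\mathfrak n}$. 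This is correct, bypasses Proposition~\ref{PropBCRational} altogether, and yields a by-product the paper never states (faithful flatness of $\phi_M$ and $\phi_L$); the price is that it is thoroughly non-constructive (maximal ideals, lying over, surjectivity of $\mathrm{Spec}$), so it could not replace the paper's argument within its stated framework without reworking. Two points in your write-up deserve care. First, the crux $\LC_{\tlex}(N(h)) = \pm 1$ for $\tlex$-monic $h$ is true, but your one-line reason does not address possible cancellations in the determinant; the clean argument is that, writing $\alpha_0$ for the leading exponent of $h$ and $d$ for the rank of $\LPol$ over $\phi_M(\LPol)$, every term of every permutation product in $\det(m_h)$ has exponent $\sum_{\gamma} \alpha_{\gamma}$ with each $\alpha_{\gamma}$ an exponent of $h$, and since the lex order on $\ZZ^n$ is compatible with addition, the maximal exponent $d\,\alpha_0$ is attained only by the single permutation induced by translation by $\alpha_0$, with coefficient $\pm 1$. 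Second, two harmless slips: the prime $\mathfrak n$ of $\A$ lying over $\fm'$ need not be maximal (irrelevant, since the localization of a Pr\"ufer domain at any prime is a valuation domain), and your $\check h$ is a Laurent polynomial, so it must be multiplied by a monomial (a unit of $\LPol$) to produce an element of $S_{\ord} \cap \mathfrak q$ and reach the contradiction.
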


\begin{proof}{Proof of Theorem \ref{ThBCRational}}
Combine Propositions \ref{PropLex}, \ref{PropKrullDim}, \ref{PropBCRational} and Corollary \ref{CorPrufer}.
\end{proof}

Note that Theorem \ref{ThBCRational} is no longer true when
the considered monomial order is irrational as shows  the following example.

\begin{example}\label{CounterExample} {\rm Let's take again the counterexample given in \cite{Y-JA21} and consider
  a valuation domain $\V$ whose valuation group is $G= (\mathbb{Z} + \mathbb{Z} \sqrt{2},\;+)$. Denote by ${\mathsf v}$
the valuation corresponding to $\V$.  Since $G$ is a subgroup of $(\mathbb{R},\;+)$, it is archimedean and thus $\dim \V = 1$. Let us pick $a,\;b \in \V$ such that  ${\mathsf v}(a)= 1$ and ${\mathsf v}(b)=\sqrt{2}$, and consider the  ideal $J=\gen{-1+aX,\; -1+bY}$ of $\V[X,Y]$. Let us equip $\V[X,Y]$ with  the irrational monomial order ${{\ord}}$  given by the row matrix $ (1  \;\; \sqrt{2})$.
 We know by Claim 2 of the counterexample given in \cite{Y-JA21} that $J \cap S_{\ord} =\emptyset$
and thus there is no B\'ezout identity between $-1+aX$ and $-1+bY$ in $\V_{\ord}\gen{X,Y}$ despite that ${\rm gcd}(-1+aX,\; -1+bY)=1$. We conclude that $\V_{\ord}\gen{X,Y}$ is not a B\'ezout domain contrary to $\V_{\tlex}\gen{X,Y}$. Since $\V_{\ord}\gen{X,Y}$ is a gcd domain and knowing that a Pr\"ufer gcd domain  is a B\'ezout domain as well as a gcd domain   of Krull dimension $\leq 1$ \cite[Theorem~XI.3.12]{LQ}, we infer that $\V_{\ord}\gen{X,Y}$ is not a Pr\"ufer domain and its Krull dimension is not $\leq 1$.
}
\end{example}


\begin{thebibliography}{}
\bibitem{BAY} Faten Ben Amor, Ihsen Yengui. {\it The trailing terms ideal}. J. Algebra Appl. Vol. {\bf 20}, No. 9 (2021).

\bibitem{Bi67}  Errett Bishop. {\it Foundations of constructive analysis}. McGraw-Hill, New York, 1967.

\bibitem{BC} James W. Brewer, Douglas L. Costa.
{\it  Projective modules over some non-Noetherian polynomial rings}.
J. Pure Appl. Algebra {\bf 13} (1978) 157--163.

\bibitem{CEK} Paul-Jean Cahen,  Zahra Elkhayyari,  Salah-Eddine Kabba. {\it Krull and valuative dimension
of the Serre conjecture ring $R  \langle n \rangle $}. Lect. Not.
Pure and Appl. Math. {\bf 185} (1997) 173--185.

\bibitem{FS}
L\'azl\'o~Fuchs, Luigi~Salce.
\newblock {\em Modules over non-{N}oetherian domains}, volume~84 of {\em
  Mathematical Surveys and Monographs}.
\newblock American Mathematical Society, Providence, RI, 2001.

\bibitem{Kap}
Irving Kaplansky.
\newblock Commutative rings.
\newblock Boston: {Allyn} and {Bacon}, {Inc}. {X}, 180 p. (1970).

\bibitem{Kemper:Trun:VanAnh:2017} Gregor Kemper, Ngo Viet Trung, Nguyen Thi Van Anh. {\it Toward a theory of monomial preorders}.
  Math. Comp. {\bf 87} (2018) 2513--2537.

 \bibitem{KY} Gregor Kemper, Ihsen Yengui. {\it Valuative dimension and monomial orders}.
J. Algebra {\bf 557} (2020) 278--288.

\bibitem{Lom98a} Henri Lombardi.
{\it Dimension de Krull, Nullstellens\"atze et \'Evaluation
dynamique}. Math. Zeitschrift   {\bf 242} (2002) 23--46.

\bibitem{LNY} Henri Lombardi, Stefan Neuwirth,  Ihsen Yengui. {\it The syzygy theorem for strongly discrete coherent rings},  Preprint 2019.

\bibitem{LQ}
Henri Lombardi, Claude Quitt{\'e}. {\it Commutative algebra: constructive methods. Finite projective
  modules}. Algebra and Applications, 20, Springer, Dordrecht, 2015. Translated from the French (Calvage \& Mounet, 2011, revised and
  extended by the authors) by Tania K. Roblot.

\bibitem{LQY} Henri Lombardi, Claude Quitt\'e, Ihsen Yengui.
{\it Hidden constructions in abstract algebra (6) The theorem of
Maroscia, Brewer and Costa}. J. Pure Appl. Algebra {\bf 212} (2008)
1575--1582.

\bibitem{LSY} Henri Lombardi, Peter Schuster, Ihsen Yengui. {\it The Gr\"obner
ring conjecture in one variable}. Math. Z. {\bf  270} (2012)
1181--1185.

\bibitem{MRR} Ray Mines, Fred Richman,  Wim Ruitenburg. {\it A course in constructive algebra. Universitext}. Springer, New York, 1988.

\bibitem{MoY} Samiha Monceur, Ihsen Yengui. {\it On the leading terms ideals
of polynomial ideals over a valuation ring}. J. Algebra {\bf 351}
(2012) 382--389.

\bibitem{Y4} Ihsen Yengui. {\it  The Gr\"obner Ring Conjecture in the lexicographic order case}. Math. Z. {\bf
276} (2014) 261--265.

\bibitem{Y5} Ihsen Yengui. {\it Constructive Commutative Algebra}. Lecture Notes in Mathematics, no 2138, Springer 2015.

\bibitem{Y-JA21} Ihsen Yengui.  {\it A counterexample to the Gr\"obner ring conjecture}. J. Algebra  {\bf 586} (2021) 526--536

\bibitem{Y6} Ihsen Yengui. {\it Computational Algebra: Course and Exercises with Solutions}. World Scientific Publishing 2021.
\end{thebibliography}
\end{document}